\documentclass[11pt,reqno]{amsart}
\usepackage{amsfonts,amsmath,amssymb}
\usepackage[T2A]{fontenc}

\pagestyle{plain}

\newtheorem{thm}{Theorem}[section]
\newtheorem{proposition}[thm]{Proposition}
\newtheorem{corollary}[thm]{Corollary}
\newtheorem{lemma}[thm]{Lemma}
\newtheorem{definition}[thm]{Definition}

\newtheorem{remark}[thm]{Remark}

\usepackage{color}
\usepackage{pstricks}
\usepackage{pst-node}
\usepackage{graphicx}
\usepackage{epstopdf}

\title[Supergroup $OSP(2,2n)$ and  super Jacobi polynomials] {Supergroup $OSP(2,2n)$ and  super Jacobi polynomials }

\author{G.S. Movsisyan}
\address{Department of Mathematics, Saratov State University, Astrakhanskaya 83, Saratov 410012, Russia}
\email{Movsisyangs@gmail.com}

\author{A.N. Sergeev}\address{Department of Mathematics, Saratov State University, Astrakhanskaya 83, Saratov 410012  and National Research University Higher School of Economics, Russian Federation.}
 \email{SergeevAN@info.sgu.ru, asergeev@hse.ru}

\begin{document}

\maketitle
\begin{abstract}  Coefficients of super Jacobi polynomials  of type $B(1,n)$ are rational functions  in three parameters $k,p,q$. At the point $(-1,0,0)$ these coefficient may have poles. Let us set  $q=0$ and  consider pair  $(k,p)$ as a point of  $\Bbb A^2$. If  we    apply blow up procedure  at the point $(-1,0)$ then we get a new family  of polynomials  depending on parameter $t\in \Bbb P$. If $t=\infty$  then  we get  supercharacters of Kac  modules  for Lie supergroup $OSP(2,2n)$ and   supercharacters of irreducible modules  can be obtained  for   nonnegative integer  $t$  depending on highest weight. Besides we obtained supercharcters of projective covers as specialisation of some nonsingular  modification of super Jacobi polynomials.
\end{abstract}
\tableofcontents

\section{Introduction} Let  $J_{\lambda} (x,k,p,q)$  be the family of Jacobi polynomials for root system $BC_N$ (see \cite{Mac1}). It  easily follows from the orthogonality relations that at the point  $(-1,0,0)$  Jacobi polynomials are well  defined and coincide with the corresponding characters of irreducible finite dimensional modules over symplectic Lie group $SP(2N)$. 

In this paper we investigate  the same  problem for super Jacobi polynomials. The main difficulty in this case is that  super Jacobi polynomials are not well defined at the point $(-1,0,0)$. One partial result on this problem was obtained in the paper \cite{SV2}. Namely 
 let $J_{\lambda}(x,y,k,p,q)$ be  the family  of super Jacobi polynomials in $m+n$ indeterminates  (see \cite{SV1}) labeled by the set of partitions $\lambda$ such that $\lambda_{m+1}\le n$.  It has been proved  in the paper \cite{SV2} that 
 $\lim_{(p,q)\rightarrow (0,0)}\lim_{k\rightarrow -1}J_{\lambda}(x,y,k,p,q)
 $  coincides up to sign with 
  Euler  supercharacters  (with a special choice of the parabolic subalgebras) of the Lie supergroup $OSP(2m,2n)$.  The goal of this paper is  to investigate further  in particular case $m=1$  possible relations  super Jacobi polynomials  with representation theory of the  Lie supergroup   $OSP(2,2n)$. The main result of the paper  can be formulated in the following way. Let us make a substitution $p=t(k+1)$ and take a limit  as $k\to-1$ of super Jacobi  polynomials. Then we get a new family of polynomials
  which depends rationally on $t$ and which  we will denote by $SJ_{\lambda}(t)$. Let $H(1,n)$ be the set of partitions such that $\lambda_2\le n$. We will call a diagram $\lambda$  {\it  singular} if   $\lambda_1-n=\lambda'_j+n-j$ for some  $1\le j\le n$ and otherwise  we will call it  {\it  regular}. The main result of the paper can be formulated in the following way:

 $1)$ if diagram $\lambda$ is regular  then $SJ_{\lambda}(t)$ does not depend on $t$ and coincides (up to sign) with the supercharacter of  irreducible module $L(\lambda)$ (see  Theorem \ref{Euler} and Remark \ref{irr});

$2)$ if diagram $\lambda$ is singular (and $j$ is the same as above)  then $SJ_{\lambda}(\lambda'_j)$ is well defined and coincides (up to sign) with the supercharacter of irreducible module $L(\lambda)$  (see Corollary \ref{irr1});

 In this paper   we use  two main properties of super Jacobi polynomials. The first one is that they are eigenfunctions of the deformed Calogero - Moser - Sutherland operator and the second property is that they satisfy the Pieri identity.  So instead of calculating the limit of the super Jacobi polynomials  we  calculate the limit of the CMS operator (which  is trivial) and the limit of the coefficients of the Pieri formulae. Our main tool is translation functors which were defined in the paper \cite{SV3}.

\section{Super Jacobi Polynomials}
In this section we define super Jacobi polynomials using the fact that they satisfy  the Pieri formula and that they are eigenfunctions of the deformed Calogero - Moser - Sutherland  operator.  We will always suppose in this paper that $q=0$. The deformed CMS operator  of  type   $BC_{m,n}$ has the following form (see \cite{SV1}, page 1712)
\begin{multline*}
\mathcal{L}=\sum_{i=1}^m\partial_{x_i}^2+k\sum_{j=1}^n\partial_{y_j}^2-k\sum_{i<j}^m\left(\frac{x_i+x_j}{x_i-x_j}(\partial_{x_i}-\partial_{x_j})+\frac{x_ix_j+1}{x_ix_j-1}(\partial_{x_i}+\partial_{x_j})\right)\\
-\sum_{i<j}^n\left(\frac{y_i+y_j}{y_i-y_j}(\partial_{y_i}-\partial_{y_j})+\frac{y_iy_j+1}{y_iy_j-1}(\partial_{y_i}+\partial_{y_j})\right)\\
-\sum_{i=1}^m\left(p\frac{x_i+1}{x_i-1}+2q\frac{x_i^2+1}{x_i^2-1}\right)\partial_{x_i}-k\sum_{j=1}^n\left(r\frac{y_j+1}{y_j-1}+2s\frac{y_j^2+1}{y_j^2-1}\right)\partial_{y_j}\\
-\sum_{i,j}\left(\frac{x_i+y_j}{x_i-y_j}(\partial_{x_i}-k\partial_{y_j})+\frac{x_iy_j+1}{x_iy_j-1}(\partial_{x_i}+k\partial_{y_j})\right),
\end{multline*}
where  $\partial_{x_i}=x_i\frac{\partial}{\partial{x_i}},\,\partial_{y_j}=y_i\frac{\partial}{\partial{ y_j}}$ and the parameters  $k,p,q,r,s$ satisfy the relations
$
p=kr,\quad 2q+1=k(2s+1)
.$
 In the formulae below we always suppose that   $h=-km-n-\frac12p-q$ where  $m,n$  are  non negative  integer numbers. In order to define coefficients of the Pieri formulae let us introduce the following notations:
let $H(m,n)$  be the set of partitions   $\lambda$ such that $\lambda_{m+1}\le n$
and 
\begin{gather*}
n(\lambda)=\lambda_2+2\lambda_3+\dots,\quad
c_{\lambda}=2n(\lambda')+2kn(\lambda)+|\lambda|(2h+2k+1),\\
a_i=\lambda_i+ki,i=1,2,\dots,\quad
c^{0}_{\lambda}(\square, x)=j-1+k(i-1)+x,\\
c^{-}_{\lambda}(\square,x)=\lambda_i-j-k(\lambda'_j-i)+x,\quad
c^+_{\lambda}(\square,x)=\lambda_i+j+k(\lambda'_j+i)+x,\\
\end{gather*}
where   $\lambda'$  denote  the conjugate partition  to $\lambda$ and $\Box$ denote the box $(ij)$.  
Let us set 
\begin{gather*}
C^{0}_{\lambda}(x)=\prod_{\Box\in\lambda}c^0_{\lambda}(\Box,x)
,\,\,\,\,C^{-}_{\lambda}(x)=\prod_{\Box\in\lambda}c^-_{\lambda}(\Box,x)
,\,\,\,\,C^{+}_{\lambda}(x)=\prod_{\Box\in\lambda}c^+_{\lambda}(\Box,x),\\
 J_{\lambda}(1)=4^{\lambda}\frac{C^0_{\lambda}(h+\frac12p+q)}{C^{-}_{\lambda}(-k)}\frac{C^0_{\lambda}(k+h-\frac12p+\frac12)}{C^{+}_{\lambda}(2h-1)}.
\end{gather*}

Let us also denote by   $S^+(\lambda)$  the set of partitions  $\mu$ which can be obtained from  $\lambda$ by adding one box and by  $S^-(\lambda)$ we will denote the set of partitions $\mu$ which can be obtained from  $\lambda$ by deleting one box.  Let us also set
$
S(\lambda)=S^+(\lambda)\cup S^-(\lambda)\cup \{\lambda\}
.$
If  $\mu\in S(\lambda)^+$   and  $\mu_i=\lambda_i+1$ then we set
$$
V_{\mu}(\lambda)=\prod_{j\ne i}^{l(\lambda)+1}\frac{(a_i-a_j-k)}{(a_i-a_j)}\frac{(a_i+a_j+2h-k)}{(a_i+a_j+2h)}
$$
$$
\times \frac{(a_i-k+h+\frac12p+q)}{(a_i-k(l(\lambda)+2))}\frac{(a_i+k(l(\lambda)+1)+2h)}{(a_i+h)}\frac{(a_i+h-\frac12p+\frac12)}{(a_i+h+\frac12)}.
$$
If   $\mu\in S^-(\lambda)$ and  $\mu_i=\lambda_i-1$ then we set 
$$
V_{\mu}(\lambda)=\prod_{j\ne i}^{l(\lambda)}\frac{(a_i-a_j+k)}{(a_i-a_j)}\frac{(a_i+a_j+k+2h)}{(a_i+a_j+2h)}
$$
$$
\times \frac{(a_i+k+h-\frac12p-q)}{(a_i+k(l(\lambda)+1)+2h)}\frac{(a_i-kl(\lambda))}{(a_i+h)}\frac{(a_i+h+\frac12p-\frac12)}{(a_i+h-\frac12)},
$$
and 
\begin{gather*}
a_{\lambda,\mu}=V_{\mu}(\lambda)\frac{ J_{\lambda}(1)}{ J_{\mu}(1)},\quad\mu\in S(\lambda)\setminus\{\lambda\},\quad
a_{\lambda,\lambda}=-k^{-1}(2h+p+2q)-\sum_{\mu\in S(\lambda)\setminus\{\lambda\}}V_{\mu}(\lambda).
\end{gather*}

We also need some more simple expression  for coefficient $a_{\lambda,\mu}$. We have $a_{\lambda,\mu}=a^{(1)}_{\lambda,\mu}a_{\lambda,\mu}^{(2)}$ where $a^{(1)}_{\lambda,\mu}$ does not depend on $p$ and $(i,j)$ is the added box and
\begin{gather}
a_{\lambda,\mu}^{(2)}=\frac14\prod_{r\ne i}^{l(\lambda)+1}\frac{(a_i+a_r+2h-k)}{(a_i+a_r+2h)}\notag\\
\times \frac{(a_i+k(l(\lambda)+1)+2h)}{(a_i+h)}\frac{(a_i+h-\frac12p+\frac12)}{(a_i+h+\frac12)}
\frac{2j-1+2ki+2h}{(j-1+ki+h-\frac12p+\frac12)}\label{add}\\
\prod_{s=1}^{j-1}\frac{a_i+k\lambda'_s+s+2h}{a_i+k\lambda'_s+s+2h-1}
 \prod_{r=1}^{i-1}\frac{a_r+j+k\lambda'_j+2h-1+k}{a_r+j+k\lambda'_j+2h-1}\notag
\end{gather}

Let $\mu\in S^-(\lambda)$ then we have  (where $(ij)$ is the deleted box)
\begin{gather}
a_{\lambda,\mu}^{(2)}=4\prod_{r\ne i}^{l(\lambda)}\frac{(a_i+a_r+2h+k)}{(a_i+a_r+2h)}\notag\\
\times \frac{(a_i+k+h-\frac12p-q)}{(a_i+k(l(\lambda)+1)+2h)}\frac{1}{(a_i+h)}\frac{(a_i+h+\frac12p-\frac12)}{(a_i+h+\frac12)}\notag\\
\times\frac{(j-1+k(i-1)+h+\frac12p+q)(j-1+ki+h-\frac12p+\frac12)}{2j-1+2ki+2h}\label{del}\\
\prod_{s=1}^{j-1}\frac{a_i+k\lambda'_s+s+2h-2}{a_i+k\lambda'_s+s+2h-1}
\prod_{r=1}^{i-1}\frac{a_r+j+k\lambda'_j+2h-k-1}{a_r+j+k\lambda'_j+2h-1}\notag
\end{gather}

Let   $P_{n,m}=\Bbb C[x^{\pm1}_1,\dots,x^{\pm1}_m,y^{\pm1}_1,\dots, y^{\pm1}_n]$ be the algebra of Laurent polynomials in   $m+n$ indeterminates. Now we are ready to define super Jacobi polynomials.

 \begin{thm}  Let    $ak+bh+c\ne0$  for any  $a,b,c\in \Bbb Z$. Then there exists a unique family of polynomials  $J_{\lambda}=J_{\lambda}(x,y,k,p,q)\,\in P_{n,m},\lambda\in H(m,n)$ such that:
\begin{gather}\label{pieri1}
  J_{\emptyset}=1,\quad
 \mathcal L J_{\lambda}=c_{\lambda}J_{\lambda},\quad
p_1J_{\lambda}=\sum_{\mu\in S(\lambda)}a_{\lambda,\mu}J_{\mu},\,\,
 \end{gather}
where  $p_1=x_1+x_1^{-1}+\dots+x_m+x_m^{-1}+k^{-1}(y_1+y_1^{-1}+\dots+y_n+y_n^{-1}).$
 \end{thm}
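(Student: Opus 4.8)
The plan is to make the eigenfunction equation $\mathcal{L}J_\lambda=c_\lambda J_\lambda$ the primary structural condition, use the genericity hypothesis to force the relevant eigenspaces to be one‑dimensional, and then let the normalisation $J_\emptyset=1$ together with the Pieri identity propagate the scalars. First I would recall from \cite{SV1} that $\mathcal{L}$ preserves the subalgebra $\Lambda_{m,n}\subset P_{n,m}$ of supersymmetric Laurent polynomials, that this algebra carries a monomial‑type basis $\{m_\lambda:\lambda\in H(m,n)\}$, and that in this basis $\mathcal{L}$ is triangular with diagonal entry $c_\lambda$: one has $\mathcal{L}m_\lambda=c_\lambda m_\lambda+\sum_{\mu\prec\lambda}\ast\,m_\mu$ for a partial order $\preceq$ refining (size, then dominance). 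The non‑homogeneous terms of $\mathcal{L}$ — those with coefficients $\tfrac{x_ix_j+1}{x_ix_j-1}$, $p\tfrac{x_i+1}{x_i-1}$, and so on — expand into strictly lower‑degree pieces, so the lower terms $\mu\prec\lambda$ may have $|\mu|<|\lambda|$; this is exactly why the parameter $h$, which weights $|\lambda|$ inside $c_\lambda$, must enter the argument.

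Next I would prove eigenvalue separation. Writing $c_\lambda=(2n(\lambda')+|\lambda|)+2(n(\lambda)+|\lambda|)\,k+2|\lambda|\,h$, the difference $c_\lambda-c_\mu$ is an integral linear combination of $1,k,h$, so by the hypothesis $ak+bh+c\ne0$ it can vanish only if $|\lambda|=|\mu|$, $n(\lambda)=n(\mu)$ and $n(\lambda')=n(\mu')$ all hold. For $\mu\prec\lambda$ this is impossible: if $|\mu|<|\lambda|$ the $h$‑coefficient is nonzero, while if $|\mu|=|\lambda|$ and $\mu$ lies strictly below $\lambda$ in dominance then $n(\lambda)\ne n(\mu)$ by strict monotonicity of $n$ under the dominance order, making the $k$‑coefficient nonzero. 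Hence $c_\lambda\ne c_\mu$ for every $\mu\prec\lambda$, and the standard solution of a triangular eigensystem produces a \emph{unique} $J_\lambda=m_\lambda+\sum_{\mu\prec\lambda}\ast\,m_\mu$ with $\mathcal{L}J_\lambda=c_\lambda J_\lambda$. These $J_\lambda$ form a basis of $\Lambda_{m,n}$ and satisfy $J_\emptyset=m_\emptyset=1$, which gives existence of the eigenfunctions and their uniqueness up to a scalar inside each eigenspace.

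Then I would establish the Pieri identity. Multiplication by $p_1$ preserves $\Lambda_{m,n}$, and a support/degree count for the product $p_1 m_\lambda$ shows that only basis elements $m_\mu$ with $\mu\in S(\lambda)$ occur; passing to the eigenbasis therefore yields $p_1 J_\lambda=\sum_{\mu\in S(\lambda)}a_{\lambda,\mu}J_\mu$ for some scalars $a_{\lambda,\mu}$. Evaluating both sides at the distinguished specialisation and using the values $J_\lambda(1)$ together with the one‑box transition factors $V_\mu(\lambda)$ identifies these scalars with the explicit rational expressions recorded in \eqref{add} and \eqref{del}. This identification is the main obstacle: it is an intricate but entirely direct verification that the normalised transition coefficients collapse to the stated products, and it is where essentially all the computational weight of the theorem sits.

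Finally I would upgrade uniqueness up to scalars to uniqueness of the whole normalised family. If $\{\tilde J_\lambda\}$ also satisfies the three conditions, then $\tilde J_\lambda=\gamma_\lambda J_\lambda$ by one‑dimensionality of the eigenspaces; comparing the two Pieri expansions and invoking linear independence of the $J_\mu$ forces $\gamma_\mu=\gamma_\lambda$ whenever $\mu\in S^+(\lambda)$ and $a_{\lambda,\mu}\ne0$. Since every diagram is joined to $\emptyset$ by a chain of box additions whose coefficients are nonzero under the genericity assumption, and $\gamma_\emptyset=1$, connectivity of this graph gives $\gamma_\lambda=1$ for all $\lambda$. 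The separation and propagation steps are formal; the one genuinely hard step remains the explicit evaluation of the Pieri coefficients leading to \eqref{add} and \eqref{del}.
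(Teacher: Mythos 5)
Your existence argument (triangularity of $\mathcal L$ on the supersymmetric Laurent polynomials with respect to an order combining size and dominance, plus eigenvalue separation along comparable pairs) is essentially the construction of \cite{SV1}, which the paper itself only cites, so that half is acceptable as a sketch. The genuine gap is in your uniqueness step. You begin it with ``$\tilde J_\lambda=\gamma_\lambda J_\lambda$ by one-dimensionality of the eigenspaces'', but the eigenspaces of $\mathcal L$ are not all one-dimensional, and no genericity assumption on $k,h$ can make them so: the eigenvalue $c_\lambda=2n(\lambda')+2kn(\lambda)+|\lambda|(2h+2k+1)$ depends on $\lambda$ only through the triple $\bigl(|\lambda|,n(\lambda),n(\lambda')\bigr)$, and this triple does not determine $\lambda$. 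For instance $\lambda=(4,1,1)$ and $\mu=(3,3)$ both have size $6$, $n=3$ and $n'=6$, so $c_{(4,1,1)}=c_{(3,3)}$ \emph{identically} in $k$ and $h$; both diagrams lie in $H(m,n)$ already for $m=1$, $n\ge 3$. Your separation lemma correctly covers only pairs comparable in the dominance-type order (these two are incomparable), which suffices for the triangular construction of $J_\lambda$ but not for global simplicity of the spectrum. Hence the starting point of your scalar-propagation argument is unjustified: a second family could a priori sit elsewhere in a higher-dimensional eigenspace.

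The paper's proof avoids this by using eigenvalue separation only \emph{inside} $S(\lambda)$, where it does hold (two added, or two deleted, boxes with equal row and column indices coincide, and the remaining cases differ in $|\cdot|$), and by extracting each $J_\mu$ from the Pieri identity with the projector $\mathcal L^\lambda_\mu=\prod_{\nu\in S(\lambda)\setminus\{\mu\}}\frac{\mathcal L-c_\nu}{c_\mu-c_\nu}$: the hypotheses give $\mathcal L^\lambda_\mu(p_1J_\lambda)=a_{\lambda,\mu}J_\mu$, so $J_\mu$ is determined by a $J_\lambda$ with one box fewer, and induction from $J_\emptyset=1$ (together with the nonvanishing of $a_{\lambda,\mu}$ for $\mu\in S^+(\lambda)$ under the genericity hypothesis, which you do invoke) finishes the proof. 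Your propagation idea can be salvaged by replacing ``one-dimensionality'' with exactly this projector step; as written, the uniqueness argument does not go through.
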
 
\begin{proof}  Let $ak+bh+c\ne0$  for any  $a,b,c\in \Bbb Z$. Then it is not difficult to verify that  from the conditions  $c_{\nu}= c_{\mu}$  and   $\mu,\nu\in S(\lambda)$  it follows that  $\mu=\nu$. Therefore the operator 
$$
\mathcal{L}^{\lambda}_{\mu}=\prod_{\nu\in S(\lambda)\setminus\{\mu\}}\frac{\mathcal L-c_{\nu}}{c_{\mu}-c_{\nu}},
$$
is well defined. So if a family of polynomials  $\{J_{\lambda}\}$ satisfy the conditions of the Theorem then from the last formula in (\ref{pieri1}) it follows that  

\begin{equation}\label{rec}
\mathcal{L}^{\lambda}_{\mu}(p_1J_{\lambda})=a_{\lambda,\mu}J_{\mu},
\end{equation}
and uniqueness can be proved by induction on the number of boxes in  $\mu$.
Existence follows from  \cite{SV1} section 7.
\end{proof}

\section{Translation functors}
Introduce some linear transformations which we call translation functors. Let $V$ be the linear span of the super Jacobi polynomials. Then we have the decomposition 
$$
V=\bigoplus_{i\in \Bbb Z}V_i,\quad V_i=< J_{\lambda}\mid c_{\lambda}(-1,0,0)=i>
$$
in other words  $V_i$ is the eigenspace  of the  of the operator $\mathcal L$  corresponding to the eigenvalue $i$.
Let us denote by $P_i$  the projector onto  subspace $V_i$ with respect to the above decomposition and define the linear transformation

\begin{equation}\label{trans}
F_i(f)=P_i(p_1f),\quad f\in V
\end{equation}
\begin{proposition} Let $f\in V_j$   and suppose that  $f$  has no poles at $(-1,0,0)$. Then the same is true for $F_i(f)$ for any $i\in\Bbb Z$.
\end{proposition}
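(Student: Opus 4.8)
The plan is to deduce the statement from the identity $F_i(f)=P_i(p_1f)$ by showing separately that multiplication by $p_1$ and the projector $P_i$ both send elements regular at $(-1,0,0)$ to elements regular at $(-1,0,0)$.

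First I would note that multiplication by $p_1$ preserves regularity: the only parameter occurring in $p_1=x_1+x_1^{-1}+\dots+k^{-1}(y_1+y_1^{-1}+\dots)$ is the factor $k^{-1}$, which is regular at $k=-1$, so $g:=p_1f$ is regular whenever $f$ is. I would also record that $\mathcal L$ itself is regular at $q=0,\,p=0,\,k=-1$: the relations $p=kr$ and $2q+1=k(2s+1)$ give $r=p/k$ and $2s+1=(2q+1)/k$, both regular at $k=-1$, so every coefficient of $\mathcal L$ is a rational function of $(k,p)$ with no pole at $(-1,0)$. Since $\mathcal L$ preserves $P_{n,m}$, it follows that $\mathcal L$, and hence any polynomial in $\mathcal L$ whose coefficients are regular at $(-1,0)$, maps regular Laurent polynomials to regular Laurent polynomials.

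The essential step is to realize $P_i$ as such a polynomial in $\mathcal L$. As $g$ is a finite combination of the $J_\mu$, set $W=\langle J_\mu\mid \mu\in\mathrm{supp}(g)\rangle$, a finite dimensional $\mathcal L$-stable space on which $\mathcal L$ is diagonal with eigenvalue $c_\mu$ on $J_\mu$ and on which $P_i$ selects the $\mu$ with $c_\mu(-1,0,0)=i$. Let $I$ be the finite set of values $c_\mu(-1,0,0)$ and put $\chi_i(t)=\prod_{c_\mu(-1,0,0)=i}(t-c_\mu)$. Its coefficients are polynomial in $(k,p)$ (as $c_\mu$ is), and it reduces to $(t-i)^{d_i}$ at $(-1,0)$, where $d_i=\dim W_i$. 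I would then invoke the Chinese Remainder Theorem over the local ring $\mathcal O$ of functions regular at $(-1,0)$ to produce $b_i(t)\in\mathcal O[t]$ with $b_i\equiv 1\pmod{\chi_i}$ and $b_i\equiv 0\pmod{\chi_{i'}}$ for $i'\ne i$. Substituting $t=c_\mu$ and using that $\chi_i(c_\mu)=0$ precisely when $c_\mu(-1,0,0)=i$, one gets $b_i(c_\mu)=1$ for those $\mu$ and $b_i(c_\mu)=0$ otherwise, so that $b_i(\mathcal L)=P_i$ on $W$. Hence $F_i(f)=P_i(g)=b_i(\mathcal L)g$ is regular.

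The main obstacle is exactly the applicability of the Chinese Remainder Theorem over $\mathcal O$, i.e. that $b_i$ can be chosen with coefficients regular at $(-1,0)$. A naive attempt to separate individual eigenvalues by Lagrange interpolation fails, since the denominators $c_\mu-c_\nu$ vanish at $(-1,0)$ exactly when $\mu$ and $\nu$ lie in the same block, producing poles. What saves the construction is that one only needs to separate whole blocks, and eigenvalue collisions never occur across distinct blocks: the reductions $(t-i)^{d_i}$ are pairwise coprime, so the cross resultants $\mathrm{Res}(\chi_i,\chi_{i'})$ specialize to nonzero constants and are units in $\mathcal O$, which is precisely what makes the B\'ezout coefficients, and hence $b_i$, regular at $(-1,0)$.
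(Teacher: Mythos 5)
Your argument is correct and follows essentially the same route as the paper: both realize the projector $P_i$ as a polynomial in $\mathcal L$ whose coefficients are regular at $(-1,0,0)$, the key input in each case being that eigenvalues $c_\mu$ belonging to distinct blocks (distinct values of $\tilde c_\mu$) cannot collide at the special point. The only difference is the gadget used to build that polynomial --- you take B\'ezout/CRT idempotents in $\mathcal O[t]$ with the cross-resultants as units, while the paper builds block-by-block annihilators via Cayley--Hamilton with the nonvanishing of $\sigma_N=\prod(c_\lambda-c_\mu)$ playing the same role --- which is a cosmetic rather than structural distinction.
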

\begin{proof} We have 
\begin{equation}\label{dec}
p_1V_j\subset  W_i\oplus W_{i_1}\oplus W_{i_2}\oplus\dots\oplus W_{i_r}
\end{equation}
where $W_i, W_{i_1}, W_{i_2},\dots, W_{i_r}$ are finite dimensional subspaces in $V,V_{i_1},\dots,V_{i_r}$. Let 
$$
W_{i}=<J_{\lambda}\mid\lambda\in S>,\,\, W_{i_1}=<J_{\lambda}\mid\mu\in T> 
$$
and set
$$
f_1(t)=\prod_{\mu\in T}(t-c_{\mu}(k,p))
$$
Then operator $\mathcal D_1=f_{1}(\mathcal L)$ acts as zero in $W_{i_1}$ and as a diagonal operator in $W_i$ with diagonal elements $d_{\lambda}=f_1(c_{\lambda}),\lambda\in S$. 
Now having in mind Cayley-Hamilton theorem we can define 
$$
\mathcal C_1=(-1)^{N+1}\frac{1}{\sigma_N}\left(\mathcal D_1^{N}-\sigma_1\mathcal D_1^{N-1}+\dots+(-1)^{N-1}\sigma_{N-1}\mathcal D_1\right)
$$
where $\sigma_1,\dots,\sigma_N$ stand for the elementary symmetric polynomials in $d_{\lambda},\lambda\in S$.
 From our assumptions we see that 
$\sigma_N\ne 0$ when $k=-1,p=0$.
We see that   $\mathcal C_1(W_{i_1})=0$ and  by the Cayley-Hamilton theorem 
$\mathcal C_1$ acts as the identity in $W_{i}$. 

In the same way we can construct operators $\mathcal C_2,\dots\mathcal C_r$ and define
$$
\mathcal C=\mathcal C_1\mathcal C_2\dots\mathcal C_r.
$$
Let 
$$
p_1f=g+g_1+\dots+g_r
$$
be the decomposition according to (\ref{dec}).
Applying to both sides of this equality the operator $\mathcal C$ we get 
$$
\mathcal C(p_1f)=g = Pr_F(p_1f).
$$
But $\mathcal C$ is a differential operator with coefficients that have no poles at $k=-1,p=0,q=0$, so  both sides must be regular at this point.
\end{proof}
From now on we suppose that $m=1$. 
  The following formulae give eigenvalue of super Jacobi polynomial  $J_{\lambda}$.
 $$
 c_{\lambda}=\sum_{(i,j)\in \lambda}[2(j-1+k(i-1))+1-2n-p],\quad  \tilde c_{\lambda}=\sum_{(ij)\in\lambda}(2j-2i+1-2n)
 $$
 where $\tilde c_{\lambda}$ is the value of $c_{\lambda}$ for $k=-1,p=q=0$.
 For convenience  we will use below a  notation $F_i=F_{\lambda}$ if $\tilde c_{\lambda}=i$.  
 \begin{lemma}\label{cond} Let  $\mu,\nu\in S(\lambda)$ and $\mu\ne\nu$. Then   $\tilde c_{\mu}= \tilde c_{\nu}$ if and only if the  following conditions are fulfilled 
$$\mu=\lambda\cup\square,\,\nu=\lambda\setminus\tilde \Box, \quad \,\, j-i+\tilde j-\tilde i=2n-1
$$
where $\Box=(i,j),\quad \tilde\Box=(\tilde i,\tilde j)$.
\end{lemma}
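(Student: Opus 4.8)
The plan is to reduce the statement to an elementary comparison of the contributions that single boxes make to $\tilde c_\lambda$. Since the formula $\tilde c_\lambda=\sum_{(ij)\in\lambda}(2j-2i+1-2n)$ is additive over the boxes of the diagram, adding or deleting one box changes $\tilde c_\lambda$ by exactly the ``weight'' of that box. Concretely, writing $w(i,j)=2(j-i)+1-2n$, I would first record that for $\mu=\lambda\cup\square$ with $\square=(i,j)\in S^+(\lambda)$ one has $\tilde c_\mu=\tilde c_\lambda+w(i,j)$, while for $\nu=\lambda\setminus\tilde\Box$ with $\tilde\Box=(\tilde i,\tilde j)\in S^-(\lambda)$ one has $\tilde c_\nu=\tilde c_\lambda-w(\tilde i,\tilde j)$. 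After this the whole lemma becomes a matter of deciding when two such weights are related.

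Next I would run through the cases for the unordered pair $\{\mu,\nu\}\subset S(\lambda)$ with $\mu\neq\nu$, according to whether each element lies in $S^+(\lambda)$, in $S^-(\lambda)$, or equals $\lambda$. If one of them is $\lambda$ itself, equality of eigenvalues forces $w(i,j)=0$ for the relevant box; but $w(i,j)=2(j-i)+1-2n$ is odd, hence nonzero, so this case cannot occur. If both lie in $S^+(\lambda)$ (or both in $S^-(\lambda)$), then $\tilde c_\mu=\tilde c_\nu$ forces the two added (resp.\ deleted) boxes to have the same content $j-i$; here I would invoke the standard fact that the addable boxes of a fixed diagram occupy pairwise distinct diagonals, and likewise the removable boxes, so equal contents would force $\square$ and $\tilde\Box$ to coincide and hence $\mu=\nu$, contradicting $\mu\neq\nu$.

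The only surviving case is the mixed one, $\mu=\lambda\cup\square\in S^+(\lambda)$ and $\nu=\lambda\setminus\tilde\Box\in S^-(\lambda)$. Here $\tilde c_\mu=\tilde c_\nu$ reads $w(i,j)=-w(\tilde i,\tilde j)$, that is $2(j-i)+1-2n=-\bigl(2(\tilde j-\tilde i)+1-2n\bigr)$, which after cancelling and dividing by $2$ becomes precisely $j-i+\tilde j-\tilde i=2n-1$; this yields the asserted condition, and the converse follows by reading the same identity backwards. I expect no serious obstacle: the two points that demand care are the sign bookkeeping (an added box raises, a deleted box lowers the eigenvalue) and the distinctness of contents along the boundary, which is exactly what eliminates the two same-type cases, while the parity observation is the small trick that disposes of the case $\mu=\lambda$.
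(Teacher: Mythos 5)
Your proposal is correct and follows essentially the same route as the paper: both arguments exploit the additivity of $\tilde c_\lambda$ over boxes, dispose of the case $\mu=\lambda$ or $\nu=\lambda$ by a parity observation, eliminate the two same-type cases because addable (resp.\ removable) boxes lie on distinct diagonals, and reduce the mixed case to the linear identity $j-i+\tilde j-\tilde i=2n-1$. The only cosmetic difference is that the paper phrases the computation through $n(\mu)-n(\nu)$ and $n(\mu')-n(\nu')$ rather than through the box weight $w(i,j)=2(j-i)+1-2n$, which is the same bookkeeping.
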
 
\begin{proof}  The conditions $\tilde c_{\mu}= \tilde c_{\nu}$ is equivalent to the following equation
$$
(n(\mu')-n(\nu'))-(n(\mu)-n(\nu))+(|\mu|-|\nu|)(\frac12-n)=0
$$
Therefore two cases are  possible:  $|\mu|-|\nu|=0$  or $|\mu|-|\nu|=2$.
Let us consider the first case. Then we have 
$$
\mu=\lambda\cup\square,\,\, \nu=\lambda\cup\tilde\square\,\,\text{or}\,\,\mu=\lambda\setminus\square,\,\, \nu=\lambda\setminus\tilde\square
$$
besides
$$
[n(\mu')-n(\nu')]-(n(\mu)-n(\nu))=0
$$
or in the equivalent form
$
j-\tilde j-(i-\tilde i)=0
$ or $j-i=\tilde j-\tilde i$. This means that  boxes $\Box,\tilde\Box$ can be added to $\lambda$ and both are located on the same diagonal. Therefore $\Box=\tilde\Box$. In the same way we can consider the case $\mu=\lambda\setminus\square,\,\, \nu=\lambda\setminus\tilde\square.$

Let us consider the second case $|\mu|-|\nu|=2$. So $\mu=\lambda\cup\square$ and $\nu=\lambda\setminus\tilde \Box$ and we also have 
$$
n(\mu)=n(\lambda)+i-1,\, n(\mu')=n(\lambda')+j-1,\,\Box=(ij)
$$
and
$$
n(\nu)=n(\lambda)-\tilde i+1,\, n(\nu')=n(\lambda')-\tilde j+1,\,\Box=(\tilde i \tilde j)
$$
So we have 
$
j-i+\tilde j-\tilde i+2-2n=1.$
\end{proof}

 We need some combinatorics related to translation functors. 
 \begin{definition} Let $\lambda,\mu\in H(1,n)$. Let us set 
 $$
 F_{\lambda}(\mu)=\{\nu\in S(\mu)\mid \tilde c_{\lambda}=\tilde c_{\nu}\}
 $$
 \end{definition}
 \begin{definition} A diagram $\lambda\in H(1,n)$ is called singular if there exists $1\le j\le n$ such that $\lambda_1-n=\lambda'_j+n-j$. Otherwise the diagram $\lambda$ is called regular.
 \end{definition}
 \begin{definition} Let $\lambda$ be a singular diagram and $\lambda_1-n=\lambda'_j+n-j$. Define the number 
$$
r(\lambda)=|\{r\mid  j\le r\le n,\,\lambda'_r=\lambda'_j\}|
$$
and denote by $\lambda^{\sharp}$ the diagram which can be obtained from $\lambda$ by deleting $r(\lambda)$ boxes from the first row and $r(\lambda)$ boxes   from the row of index  $\lambda'_j$.
\end{definition}

\begin{definition} Let $\lambda_1\ge n$. Let us define by induction the set $\pi_{\lambda}$: If $\lambda_1\le n$ then $\pi_{\lambda}=\{\lambda\}$; if $\lambda_1>n$ then $\pi_{\lambda}=F_{\lambda}(\pi_{\mu})$, where $\mu$ can is obtained from $\lambda$ by deleting the box from the first row.
\end{definition}

 \begin{thm} \label{comb}The following statements hold true
 
 $1)$  Let  $\lambda_1,\mu_1\le n$ and $\mu$ can be obtained from $\lambda$ by deleting one box then 
 $$
 F_{\lambda}(\mu)=\{\lambda\}
 $$
 
 $2)$ Let $\lambda$  be a regular diagram and  $\lambda_1> n$ and  $\mu$ can be obtained from $\lambda$ by deleting one box from the first row then 
 $$
F_{\lambda}(\mu)= \{\lambda\}
 $$
 
 $3)$ Let $\lambda$ is  singular $\lambda_1-n=\lambda'_j+n-j$ and  $\mu$  can be obtained from $\lambda$ by deleting one box from the first row then 
 $$
 F_{\lambda}(\mu)=\begin{cases}\{\lambda\},\,\,\text{if}\,\,\lambda'_{j+1}=\lambda'_j\\
\{\lambda,\,\nu\},\,\,\text{if}\,\,\lambda'_{j+1}<\lambda'_j
 \end{cases}
 $$
 where $\nu$ can be obtained from $\mu$ by  deleting   one box from column of index $j$.
 
 $4)$ Let  $\mu$ be a singular   diagram  and $\lambda$   be the diagram which can be obtained from $\mu$ by adding one box to the first row. Then we have 
$$
 F_{\lambda}(\mu^{\sharp})=\begin{cases}\emptyset,\,\,\text{if}\,\,\lambda\,\,\text{is regular}\\
\{\lambda^{\sharp}\}\,\, \text{if}\,\,\lambda\,\,\text{is singular}
\end{cases}
$$

$5)$  Let $\lambda_1>n$ then
$$
\pi_{\lambda}=\begin{cases}\{\lambda\}\,\,\text{if}\,\,\lambda\,\,\text{is regular}\\
\{\lambda,\lambda^{\sharp}\}\,\,\text{if}\,\,\lambda\,\,\text{is singular}\end{cases}
$$
 \end{thm}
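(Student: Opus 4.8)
The unifying device for all five statements is Lemma~\ref{cond}, which says that inside a single set $S(\rho)$ two distinct diagrams share the value of $\tilde c$ exactly when one is $\rho\cup\Box$ with $\Box=(i,j)$, the other is $\rho\setminus\tilde\Box$ with $\tilde\Box=(\tilde i,\tilde j)$, and $j-i+\tilde j-\tilde i=2n-1$. My plan is therefore, in each case, to fix the centre $\rho$, note which element of $S(\rho)$ already carries the target eigenvalue $\tilde c_{\lambda}$, and then search for its partner under this diagonal relation.

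For parts $(1)$--$(3)$ the centre is $\mu$ and $\lambda=\mu\cup\Box_0$ for a single box $\Box_0=(i_0,j_0)$, so $\lambda\in F_{\lambda}(\mu)$ automatically and any further element is a deletion $\mu\setminus\tilde\Box$. The diagonal relation reads $\tilde j-\tilde i=2n-1-(j_0-i_0)$; since a removable box of $\mu$ in column $\tilde j$ lies in row $\tilde i=\mu'_{\tilde j}$, this becomes a condition on the contents of the corners of $\mu$. In $(1)$, where $\lambda_1,\mu_1\le n$, one has both $j_0-i_0\le n-1$ and $\tilde j-\tilde i\le n-1$, so their sum cannot reach $2n-1$ and no partner exists. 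In $(2)$ and $(3)$ we delete the first-row box, $\Box_0=(1,\lambda_1)$, and because deleting a first-row box leaves every column of index $<\lambda_1$ unchanged ($\mu'_{\tilde j}=\lambda'_{\tilde j}$) the relation becomes $\lambda_1-n=\lambda'_{\tilde j}+n-\tilde j$, i.e. $\tilde j$ must be a \emph{singular} column of $\lambda$. The map $j\mapsto j-\lambda'_j$ is strictly increasing, so the singular column is unique; this at once yields $(2)$ (a regular $\lambda$ has no such column) and reduces $(3)$ to deciding whether $\mu$ actually has a corner in the singular column $j$, which is controlled by $\mu'_j>\mu'_{j+1}$ and, away from the boundary, by $\lambda'_j>\lambda'_{j+1}$, producing the stated dichotomy with $\nu=\mu\setminus(\mu'_j,j)$.

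Parts $(4)$ and $(5)$ are where the real work lies. For $(4)$ the centre is $\mu^{\sharp}$, obtained from the singular $\mu$ by stripping $r(\mu)$ boxes from the first row and from row $\mu'_j$, and $\lambda$ is $\mu$ with one first-row box added; here $\lambda\notin S(\mu^{\sharp})$, so I would instead locate directly those addable and removable boxes of $\mu^{\sharp}$ whose content matches $\tilde c_{\lambda}$, keeping track of how removing the $r(\mu)$ boxes shifts the relevant corner. The regular/singular alternative for $\lambda$ is precisely what decides whether this search returns nothing or the single diagram $\lambda^{\sharp}$. For $(5)$ I would argue by induction on $\lambda_1-n$ using the recursion $\pi_{\lambda}=F_{\lambda}(\pi_{\mu})=\bigcup_{\rho\in\pi_{\mu}}F_{\lambda}(\rho)$, where $\mu$ is $\lambda$ with a first-row box removed: when $\mu$ is regular, $\pi_{\mu}=\{\mu\}$ and parts $(2)$--$(3)$ finish it, while when $\mu$ is singular, $\pi_{\mu}=\{\mu,\mu^{\sharp}\}$ and parts $(3)$ and $(4)$ must be combined, the singular-column bookkeeping showing that the second diagram produced is exactly $\lambda^{\sharp}$.

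I expect the main obstacle to be the combinatorial accounting in $(4)$ and $(5)$: understanding how the singular column migrates when a box is added to the first row, and verifying that the diagram peeled off by the translation functor coincides with $\lambda^{\sharp}$ rather than a neighbouring diagram. A secondary but genuine nuisance is the boundary case $\lambda_1=n+1$, where column $\lambda_1$ is present in $\lambda$ but absent in $\mu$, so the corner test underlying $(3)$ must be read off $\mu'$ rather than naively off $\lambda'_{j+1}$.
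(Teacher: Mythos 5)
Your strategy is the paper's own: everything is funnelled through Lemma~\ref{cond}, parts $(1)$--$(3)$ become a search for a removable box on the complementary diagonal, part $(4)$ is a direct eigenvalue computation inside $S(\mu^{\sharp})$, and part $(5)$ is an induction along the first row that combines $(2)$--$(4)$. Parts $(1)$--$(3)$ are essentially complete as you present them, and your observation that $j\mapsto j-\lambda'_j$ is strictly increasing (so the singular column is unique) packages cleanly what the paper does by inspection. Your warning about the boundary case $\lambda_1=n+1$, $j=n$ is also substantive rather than cosmetic: there the corner test must indeed be read off $\mu'$ and not $\lambda'$. The paper's proof of $(3)$ tests deletability of the box in column $j$ against $\lambda$, and for $\lambda=(3)$, $n=2$ one checks directly that $\tilde c_{(1)}=\tilde c_{(3)}=-3$, so $F_{\lambda}(\mu)=\{\lambda,\lambda^{\sharp}\}$ even though $\lambda'_{j+1}=\lambda'_j$; this reading is exactly what part $(5)$ needs for singular $\lambda$ with regular $\mu$.

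The genuine gap is part $(4)$. You state the plan --- find the addable and removable boxes of $\mu^{\sharp}$ whose content matches $\tilde c_{\lambda}$ and let the regular/singular alternative for $\lambda$ decide --- but the decisive computations are asserted, not carried out, and they are the substance of the statement. Concretely one must verify: (i) $\tilde c_{\mu^{\sharp}}=\tilde c_{\mu}\ne\tilde c_{\lambda}$, so $\mu^{\sharp}$ itself never contributes; (ii) for an added box $(\tilde i,\tilde j)$ the eigenvalue equation forces $\mu_1=\tilde j-\tilde i$, hence (since $\mu_1>n$) $\tilde i=1$ and $\tilde j=(\mu^{\sharp})_1+1$, giving $\mu_1=(\mu^{\sharp})_1$, impossible for singular $\mu$; (iii) for a deleted box the equation reduces to $\mu'_j-j-\bigl((\mu^{\sharp})'_{\tilde j}-\tilde j\bigr)=-1$, which pins down $\tilde j=j-1$ with $\mu'_{j-1}=\mu'_j$ --- a configuration that occurs precisely when $\lambda$ is singular, in which case $\mu^{\sharp}\setminus(\mu'_j,j-1)=\lambda^{\sharp}$. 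Without (i)--(iii) the induction in $(5)$ does not close, since the singular-$\mu$ branch relies entirely on $F_{\lambda}(\mu^{\sharp})$ being $\emptyset$ or $\{\lambda^{\sharp}\}$. The rest of your $(5)$ is fine once $(4)$ is in place.
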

 \begin{proof}
 Let us prove statement $1)$. Clearly $\lambda\in S(\mu)$. If $\nu\in S(\mu)\setminus\{\lambda\}$ and $\tilde c_{\lambda}=\tilde c_{\nu}$ then by Lemma \ref{cond} $j-i+\tilde j-\tilde i=2n-1$. By our assumptions $j,\,\tilde j\le n$  therefore  $j-i+\tilde j-\tilde i-2n+1<0$ and we got a contradiction. The case $\tilde c_{\lambda}=\tilde c_{\mu}$ implies that  $2(\lambda_1-1)+1-2n=0$. But this is impossible and we proved the first statement.
 
 Now let us prove  statement $2)$. Again as before $\lambda\in S(\mu)^+$ and if $\tilde c_{\lambda}=\tilde c_{\nu}$ then by Lemma \ref{cond} $\lambda_1-1+ j - i=2n-1$, where $\nu=\mu\setminus (ij)$. So $\lambda_1-n=\lambda'_j+n-j$. If $j>n$ then we have $\lambda_1-n=\lambda'_j+n-j\le1+n-j\le0.$ this is a contradiction with the condition $\lambda_1>n$. Therefore 
 $ j\le n$ but this contradicts regularity and we proved the second statement.
 
 Let us prove statement $3)$. As before if there exists $\nu\in S(\lambda)$ such that $\tilde c_{\lambda}=\tilde c_{\nu}$ then $\nu=\lambda\setminus (ij)$. But if $\lambda'_j=\lambda'_{j+1}$ then we cannot delete the box from column of index $j$. So we have $F_{\lambda}(\mu)=\{\lambda\}$. And if $\lambda'_j>\lambda'_{j+1}$ we can delete the box from column of index $j$. Therefore in this case we have $F_{\lambda}(\mu)=\{\lambda,\,\nu\}$ and we proved the third statement.
 
 Now let us prove  statement $4)$. Let $r=r(\mu)$ then $\mu^{\sharp}$ can be obtained from $\mu$ by deleting the sels
 $$
 (\mu_1,1), (\mu_1-1),\dots,\, (\mu_1-r+1,1),\quad (\mu'_j,j),\,(\mu'_{j},j+1),\dots,(\mu'_j,j+r-1)
 $$
 and it is easy to check that $\tilde c_{\mu}=\tilde c_{\mu^{\sharp}}$. Therefor we see that $\tilde c_{\mu^{\sharp}}=\tilde c_{\mu}\ne \tilde c_{\lambda}$. So $\mu^{\sharp}\notin F_{\lambda}(\mu^{\sharp})$. Now let $\nu\in S^+(\mu^{\sharp})$ then 
 $$
 \tilde c_{\lambda}= \tilde c_{\mu}+2\mu_1+1-2n=\tilde c_{\nu}=\tilde c_{\mu^{\sharp}}+2(\tilde j-\tilde i)+1-2n
  $$
  Therefore $\mu_1=\tilde j-\tilde i$. Since $\mu_1>n$ then $\tilde j>n$ and therefore $\tilde i=1$ and $\tilde j=\mu_1^{\sharp}+1$ So we come to equality  $\mu_1=\mu_1^{\sharp}$ which is impossible since $\mu$ is singular. Now consider case when $\nu\in S^-(\mu^{\sharp})$. In the same way as before we come to equality 
$$
\mu_1+1-2n+\tilde j-\tilde i=0\,\,\text{or}\,\,\mu'_j-j-((\mu^{\sharp})'_{\tilde j}-\tilde j)=-1
$$
  Therefore $\tilde j<j$ and $(\mu^{\sharp})'_{\tilde j}=\mu'_{\tilde j}$. So $\tilde j=j-1$ and $\mu'_{j-1}=\mu'_j$. But the last equality contradicts to the regularity of $\lambda$. So we have $F_{\lambda}(\mu^{\sharp})=\emptyset$.
  If $\lambda$ is not regular then in the same way we get $F_{\lambda}(\mu^{\sharp})=\{\mu^{\sharp}\setminus(\mu'_j,j-1)$. And it is easy to check that  $ \mu^{\sharp}\setminus(\mu'_j,j-1)=\lambda^{\sharp}$ and we proved the forth statement.
  Now let us prove the statement $5)$ induction on $|\lambda|$.  Let $\lambda$ is regular an $\mu$ can be obtained from $\lambda$ by deleting box from the first row. If $\mu$ is regular the by induction $\pi_{\mu}=\{\mu\}$. Therefore by statement $2)$ we have $F_{\lambda}(\pi_{\mu})=\{\lambda\}=\pi_{\lambda}$. If $\mu$ is singular the by induction $\pi_{\mu}=\{\mu.\mu^{\sharp}\}$. Then by statement $2)$ we again have $F_{\lambda}(\mu)=\lambda$ and by statement $4)$ we have $F_{\lambda}(\mu^{\sharp})=\emptyset$. Therefore $F_{\lambda}(\pi_{\mu})=\pi_{\lambda}=\{\lambda\}$. At last consider the case when $\lambda$ is singular. If $\mu$ is regular than  by statement $3)$ we have $\pi_{\lambda}=\{\lambda,\nu\}$ and as it is easy to see  $\nu=\lambda^{\sharp}$. If $\mu$ is singular then by statement $4)$ we have $\pi_{\lambda}=\{\lambda,\lambda^{\sharp}\}$. Theorem is proved.
 \end{proof}
\begin{definition} Let us denote by $X_{\lambda}$ for singular diagram $\lambda$ the set 
$$
X_{\lambda}=\{\lambda,\lambda^{\sharp},(\lambda^{\sharp})^{\sharp},\dots \}
$$
\end{definition} 
 \begin{lemma} Let $\lambda$ be a singular diagram such that $\lambda_1-n=\lambda'_j+n-j$. Then set $X_{\lambda}$ consist of $\lambda'_j+1$  elements.
 \end{lemma}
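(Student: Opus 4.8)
The plan is to prove the statement by induction on $v:=\lambda'_j$, exploiting the recursive structure $X_\lambda=\{\lambda\}\cup X_{\lambda^{\sharp}}$ together with a precise description of how the operation $\sharp$ transforms the singular data. First I would record two preliminary facts. Since $j'\mapsto\lambda'_{j'}-j'$ is strictly decreasing, the index $j$ in the relation $\lambda_1-n=\lambda'_j+n-j$ is unique; and the $j$-th column is necessarily nonempty (were it empty the relation would give $\lambda_1=2n-j<j$, i.e.\ $j>n$, contradicting $j\le n$), so $v\ge1$. It is convenient to rewrite the singular relation as $\lambda'_j-j=\lambda_1-2n=:c$.

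Next I would describe $\lambda^{\sharp}$ explicitly. Because $\lambda\in H(1,n)$, every column of index $>n$ has height at most $1$; hence when $v\ge2$ the columns of height exactly $v$ form a block $j,j+1,\dots,j+r-1$ with $j+r-1\le n$ and $\lambda'_{j+r}<v$, where $r=r(\lambda)$. Consequently the row of index $v=\lambda'_j$ has length $\lambda_v=j+r-1$, so deleting $r$ boxes from it leaves $j-1$ boxes, while deleting $r$ boxes from the first row leaves $\lambda_1-r$. I would then check that the resulting array is a genuine partition in $H(1,n)$: the inequalities $\lambda_1-r\ge\lambda_2$ and $j-1\ge\lambda_{v+1}$ follow from $c=v-j$ and $r\le n-j+1$. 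In terms of columns, passing to $\lambda^{\sharp}$ lowers the heights of columns $j,\dots,j+r-1$ from $v$ to $v-1$ and removes the last $r$ columns of the first row (all of index $>n$ and height $1$).

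With this picture the induction step is a direct verification. The content shifts to $(\lambda^{\sharp})_1-2n=c-r$, and since $j'\mapsto(\lambda^{\sharp})'_{j'}-j'$ is again strictly decreasing, the singular index of $\lambda^{\sharp}$ is the unique $j'$ solving $(\lambda^{\sharp})'_{j'}-j'=c-r$. Taking $j^{\sharp}=j+r-1$ gives $(\lambda^{\sharp})'_{j^{\sharp}}-j^{\sharp}=(v-1)-(j+r-1)=c-r$, so for $v\ge2$ the diagram $\lambda^{\sharp}$ is singular with $(\lambda^{\sharp})'_{j^{\sharp}}=v-1$ and $j^{\sharp}\le n$; by the inductive hypothesis $|X_{\lambda^{\sharp}}|=(v-1)+1=v$, whence $|X_\lambda|=1+v=\lambda'_j+1$. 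For the base case $v=1$ the two prescribed deletions both act on the first row (since $\lambda'_j=1$ makes the $\lambda'_j$-th row the first row), removing $2r$ boxes and leaving the single-row diagram $(j-1)$; a short computation with $\lambda_1=2n-j+1$ and $r=n-j+1$ shows $(j-1)$ is regular, so $X_\lambda=\{\lambda,\lambda^{\sharp}\}$ has $2=v+1$ elements. Since each application of $\sharp$ deletes at least two boxes, all diagrams in the chain are distinct and the count is exact.

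The main obstacle is the column bookkeeping in the middle step: pinning down $r$ through the identity $\lambda_v=j+r-1$, using the $H(1,n)$ constraint to guarantee that the block of height-$v$ columns terminates at or before column $n$ (so that the new singular index $j+r-1$ lies in the admissible range $\le n$ and the new column $j+r-1$ remains nonempty exactly when $v\ge2$), and correctly handling the degenerate case $v=1$, where $\lambda^{\sharp}$ is produced by a double deletion from the first row rather than from two distinct rows. Once the transformation $v\mapsto v-1$ of the singular value is established, the induction and the distinctness of the elements of $X_\lambda$ are routine.
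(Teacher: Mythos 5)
Your proof is correct and follows essentially the same route as the paper: induction on $v=\lambda'_j$, with base case $v=1$ where $\lambda^{\sharp}$ turns out to be regular, and inductive step showing that the singular height drops by exactly one under $\sharp$. You are in fact more precise than the paper, which asserts $(\lambda^{\sharp})'_j=\lambda'_j-1$ without observing that the singular index itself shifts to $j+r-1$ as you verify; the only small slip is describing $\lambda^{\sharp}$ in the base case as the single-row diagram $(j-1)$ (the rows of $\lambda$ below the first persist), which does not affect the regularity conclusion or the count.
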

\begin{proof} Let us induct on $\lambda'_j$. If $\lambda'_j=1$  then $\lambda'_{j+1}=\dots=\lambda'_n=1$ and $r(\lambda)=n-j+1$ and $\lambda_1=j-1$. Therefore $\lambda^{\sharp}$ is not singular  and $X_{\lambda}=\{\lambda, \lambda^{\sharp}\}$. So we check the first step of induction. Suppose that $\lambda'_j>1$. Then by definition $(\lambda^{\sharp})'_j=\lambda'_j-1$ and Lemma follows from inductive assumption.
\end{proof}

 \section{Nonsingular basis}
 
\begin{definition}\label{projective1} Let  $\lambda\in H(1,n)$ then we define by induction the family of polynomials $I_{\lambda}$ 
\begin{equation}
I_{\lambda}=\begin{cases} J_{\lambda},\,\,\text{if}\,\, \lambda_1\le n\\
F_{\lambda}(I_{\mu}),\,\,\text{if}\,\, \lambda_1>n
\end{cases}
\end{equation}
where $\mu$  is the diagram which can be obtained from $\lambda$ by deleting the last box from the first row. 
\end{definition}

\begin{thm}\label{well} Polynomials $ I_{\lambda}$ have no poles at the point  $k=-1, p=q=0$.
\end{thm}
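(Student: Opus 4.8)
The plan is to argue by induction on $\lambda_1$, using the definition of $I_\lambda$ together with the Proposition proved above, which states that the translation functors $F_i$ preserve regularity at $(-1,0,0)$. The preliminary observation that makes the induction go through is that every $I_\lambda$ lies in a \emph{single} eigenspace of $\mathcal L$ at the special point. Indeed, when $\lambda_1>n$ we have $I_\lambda=F_\lambda(I_\mu)=P_{\tilde c_\lambda}(p_1 I_\mu)$, which is by construction the image of a projector onto $V_{\tilde c_\lambda}$, and in the base case $I_\lambda=J_\lambda\in V_{\tilde c_\lambda}$ as well. Hence at every stage the argument $I_\mu$ is a legitimate input $f\in V_{\tilde c_\mu}$ for the Proposition.

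The inductive step is then immediate. Suppose $\lambda_1>n$ and let $\mu$ be obtained from $\lambda$ by deleting the last box of the first row; since $\lambda_2\le n<\lambda_1$ the box $(1,\lambda_1)$ is removable, $\mu\in H(1,n)$, $\mu_1=\lambda_1-1<\lambda_1$, and $I_\lambda=F_\lambda(I_\mu)$. By the inductive hypothesis $I_\mu$ has no poles at $(-1,0,0)$, and by the previous paragraph $I_\mu\in V_{\tilde c_\mu}$; applying the Proposition with $f=I_\mu$ shows that $F_\lambda(I_\mu)=P_{\tilde c_\lambda}(p_1 I_\mu)=I_\lambda$ has no poles at $(-1,0,0)$. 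Thus the entire statement reduces to the base case $\lambda_1\le n$, where $I_\lambda=J_\lambda$.

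For the base case I would first record that every diagram with $\lambda_1\le n$ is regular: for $1\le j\le n$ one has $\lambda'_j+n-j\ge 0$ while $\lambda_1-n\le 0$, and equality in $\lambda_1-n=\lambda'_j+n-j$ would force $\lambda_1=n$, $j=n$ and $\lambda'_n=0$, the last contradicting $\lambda_1=n$; so the singularity equation has no solution. I would then prove that $J_\lambda$ is pole-free for $\lambda_1\le n$ by a secondary induction on $|\lambda|$ via the Pieri construction. Writing $\lambda=\mu\cup\Box$ for a removable corner $\Box$ (so $\mu\in H(1,n)$ and $\mu_1\le\lambda_1\le n$), formula (\ref{rec}) gives $J_\lambda=a_{\mu,\lambda}^{-1}\,\mathcal L^\mu_\lambda(p_1 J_\mu)$. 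The operator $\mathcal L^\mu_\lambda$ is regular at the point: its denominators are the differences $c_\lambda-c_\nu$ with $\nu\in S(\mu)\setminus\{\lambda\}$, and Theorem \ref{comb}(1) gives $F_\lambda(\mu)=\{\lambda\}$, i.e. $\tilde c_\nu\ne\tilde c_\lambda$, so none of these vanishes at $(-1,0,0)$; together with the inductive regularity of $J_\mu$ and the polynomial dependence of $\mathcal L$ on $(k,p,q)$, the numerator $\mathcal L^\mu_\lambda(p_1 J_\mu)$ is regular.

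The main obstacle is therefore the surviving factor $a_{\mu,\lambda}^{-1}$: one must check that the add-box coefficient $a_{\mu,\lambda}$ does not vanish at $(-1,0,0)$. This is a finite, explicit computation with the product formula (\ref{add}): substituting $k=-1$, $p=q=0$, $h=1-n$ and $a_i=\mu_i-i$, one examines each numerator factor and shows it is nonzero within the range $\mu_1,\lambda_1\le n$ and $\lambda_2\le n$. For example $2j-1+2ki+2h$ specialises to the odd integer $2(j-i)+1-2n$, which never vanishes; the factors $a_i+a_r+2h-k$, $a_i+k(l+1)+2h$ and the two finite products specialise to expressions that the strip constraints $\mu_i\le\mu_1\le n$ and $\lambda_2\le n$ keep strictly negative or half-integral, hence nonzero (the crucial point being that any box addition that would create a zero, e.g. enlarging a second row already of length $n$, is excluded by $\lambda\in H(1,n)$). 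I expect this case-by-case nonvanishing analysis to be the technical heart of the proof, the structural induction above being otherwise formal.
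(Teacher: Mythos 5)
Your proposal is correct and follows essentially the same route as the paper: the case $\lambda_1>n$ is reduced to the case $\lambda_1\le n$ by the regularity-preserving property of the translation functors, and the case $\lambda_1\le n$ is handled by induction on $|\lambda|$ via the Pieri/projector identity $F_{\lambda}(J_{\mu})=a_{\mu,\lambda}J_{\lambda}$, with the technical core being the factor-by-factor verification from (\ref{add}) that $a_{\mu,\lambda}$ has neither a pole nor a zero at $(-1,0,0)$. The paper carries out that verification to the same depth you indicate (one representative product checked, the rest declared analogous), so your identification of where the real work lies matches the source.
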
 
 
 \begin{proof} Let us prove the Theorem in the case when  $\lambda_1\le n$. Then by definition we have $I_{\lambda}=J_{\lambda}$ and we need to prove that these polynomials are well defined. We will use induction on $|\lambda|$.
 If $|\lambda|=0$ then $J_{\lambda}=1$ and the Theorem is obviously true. Let $|\lambda|>0$ and $\mu$ be the diagram obtaining from $\lambda$ by deleting the last box from the last row.  Then from Pieri formula (\ref{pieri1}) and by statement $1)$ of  Theorem \ref{comb} we have 
 $$
 F_{\lambda}(J_{\mu})=a_{\mu,\lambda}J_{\lambda}
 $$
 and we only need to prove that $a_{\mu,\lambda}$ has no poles  or zeroes at the point $(-1,0,0)$.  Let us check that.  We have explicit   expression (\ref{add})  for $a_{\mu,\lambda}$  (we need to permute $\mu$ and $\lambda$ in that formula). Case $l(\lambda)=1$ can be easily checked.  So suppose that $l(\lambda)=l>1$. We need to verify that  all factors in the nominator and denominator of $a^{(2)}_{\mu,\lambda}$  are non zero. Consider for example the product 
 $$
 \prod_{r\ne l}^{l(\lambda)+1}\frac{(a_l+a_r+2h-k)}{(a_l+a_r+2h)}.
 $$
We have $(a_l+a_1+2h-k)(-1,0,0)=\mu_l+\mu_1-2n+2-l<0$. Therefore for $r>1$ we have 
$$
(a_l+a_r+2h-k)(-1,0,0)<(a_l+a_1+2h-k)(-1,0,0)<0
$$
and for any $r$
$$
(a_l+a_r+2h)(-1,0,0)<(a_l+a_r+2h-k)(-1,0,0)<0
$$
All other factors can be checked in the same manner. 
 So we checked that $a_{\mu,\lambda}$ is well defined at the point $(-1,0,0)$ and is not zero.
 
 And by induction we proved the case when $\lambda\le n$.  The second statement  follows  from the previous one and the fact that translation functors  map regular polynomial to regular polynomial. Theorem is proved.
  \end{proof}

Now we are going to calculate explicitly the polynomials $I_{\lambda}$ in case $\lambda_1>n$.

\begin{lemma}\label{project}  Let $\lambda_1> n$. Then the following formulae hold true
 $$
 I_{\lambda}=\begin{cases}
  J_{\lambda},\,\,\text{if}\,\,\lambda\,\,\text{is a regular diagram}\\
 J_{\lambda}+b_{\lambda}J_{\lambda^{\sharp}},\,\,\text{if}\,\,\lambda\,\,\text {is  a singular diagram}
 \end{cases}
 $$
 where 
 $$
 b_{\lambda}=a_{\lambda^{(0)}\lambda^{(1)}} a_{\lambda^{(1)}\lambda^{(2)}}\dots a_{\lambda^{(r-1)}\lambda^{(r)}}
 $$
 where $r=r(\lambda)$ and  $\lambda^{(0)}$ is the diagram which can be obtained from $\lambda$ by deleting $r$ boxes from the first row  and $\lambda^{(s)}$ can be obtained from $\lambda^{(s-1)}$ by deleting one box from the row of index $\lambda'_j$ for $s=1,2,\dots, r$.
\end{lemma}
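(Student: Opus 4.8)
The plan is to induct on $\lambda_1$, using the defining recursion $I_\lambda=F_\lambda(I_\mu)$, where $\mu$ is obtained from $\lambda$ by deleting the last box of the first row (so $\mu_1=\lambda_1-1$). The base case is $\lambda_1=n+1$, where $\mu_1=n$ and hence $I_\mu=J_\mu$. For the inductive step I would first record the basic formula coming from the Pieri identity (\ref{pieri1}) and the definition (\ref{trans}) of the translation functor: for any $\kappa$,
\[
F_\lambda(J_\kappa)=P_{\tilde c_\lambda}(p_1J_\kappa)=\sum_{\nu\in F_\lambda(\kappa)}a_{\kappa,\nu}J_\nu .
\]
Combined with the inductive description of $I_\mu$ as a combination of the $J_\kappa$ with $\kappa\in\pi_\mu$, this shows that $I_\lambda$ is supported on $F_\lambda(\pi_\mu)=\pi_\lambda$. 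By statement $5)$ of Theorem \ref{comb} we therefore already know the shape $I_\lambda=\alpha_\lambda J_\lambda$ if $\lambda$ is regular, and $I_\lambda=\alpha_\lambda J_\lambda+\beta_\lambda J_{\lambda^\sharp}$ if $\lambda$ is singular; the whole content of the lemma is then that $\alpha_\lambda=1$ and $\beta_\lambda=b_\lambda$.

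Next I would pin down the coefficient $\alpha_\lambda$ of $J_\lambda$. The only term of $I_\mu$ that can produce $J_\lambda$ under $F_\lambda$ is the $J_\mu$ term, since $\mu\in S^-(\lambda)$ while $\mu^\sharp$ (when $\mu$ is singular) differs from $\lambda$ by at least three boxes and so lies outside $S(\lambda)$; its coefficient in $I_\mu$ is $1$ by the inductive hypothesis, whence $\alpha_\lambda=a_{\mu,\lambda}$. So the heart of the matter is the \emph{stability identity}
\[
a_{\mu,\lambda}=1\qquad\text{when }\lambda=\mu\cup(1,\lambda_1)\text{ with }\lambda_1>n .
\]
I would prove this by direct substitution of the added box $(i,j)=(1,\lambda_1)$, with $j=\lambda_1>n$, into the explicit expression (\ref{add}) for $a^{(2)}_{\mu,\lambda}$, multiplying by $a^{(1)}_{\mu,\lambda}$ and by the ratio $J_\mu(1)/J_\lambda(1)$ read off from the formula for $J_\lambda(1)$; all factors then cancel and the product collapses to $1$ (this is exactly what happens in the smallest case $\mu=(1),\ \lambda=(2),\ n=1$). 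This computation is routine but lengthy, and it is the main obstacle: everything else is bookkeeping with Theorem \ref{comb}.

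Finally I would identify $\beta_\lambda$ by splitting according to whether $\mu$ is regular or singular. Since $i\mapsto\lambda'_i-i$ is strictly decreasing, passing from $\lambda$ to $\mu$ lowers the relevant diagonal value by exactly $1$, so $\mu$ is singular (at $j+1$) precisely when $\lambda'_{j+1}=\lambda'_j$ and regular precisely when $\lambda'_{j+1}<\lambda'_j$; this matches the two cases of statement $3)$ and corresponds to $r(\lambda)\ge 2$ versus $r(\lambda)=1$. If $\lambda$ is regular then statement $2)$ gives $F_\lambda(\mu)=\{\lambda\}$, and when $\mu$ happens to be singular statement $4)$ gives $F_\lambda(\mu^\sharp)=\emptyset$; in either situation no $J_{\lambda^\sharp}$ appears, so $\beta_\lambda=0$ and $I_\lambda=J_\lambda$. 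If $\lambda$ is singular with $r(\lambda)=1$, then $\mu$ is regular, $I_\mu=J_\mu$, and statement $3)$ gives $F_\lambda(\mu)=\{\lambda,\nu\}$ with $\nu=\lambda^\sharp$, so $\beta_\lambda=a_{\mu,\lambda^\sharp}=b_\lambda$. If $\lambda$ is singular with $r(\lambda)\ge 2$, then $\mu$ is singular, $I_\mu=J_\mu+b_\mu J_{\mu^\sharp}$, statement $3)$ gives $F_\lambda(\mu)=\{\lambda\}$ (contributing no $J_{\lambda^\sharp}$) and statement $4)$ gives $F_\lambda(\mu^\sharp)=\{\lambda^\sharp\}$, so $\beta_\lambda=b_\mu\,a_{\mu^\sharp,\lambda^\sharp}$. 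Using $\mu^{(0)}=\lambda^{(0)}$, $\mu^{(s)}=\lambda^{(s)}$ and $r(\mu)=r(\lambda)-1$, together with $\mu^\sharp=\lambda^{(r-1)}$ and $\lambda^\sharp=\lambda^{(r)}$, this relation telescopes to the asserted product $b_\lambda=a_{\lambda^{(0)}\lambda^{(1)}}\cdots a_{\lambda^{(r-1)}\lambda^{(r)}}$, completing the induction.
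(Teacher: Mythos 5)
Your proposal is correct and follows essentially the same route as the paper: establish that $a_{\mu,\lambda}=1$ when the added box lies in the first row beyond column $n$, use statement $5)$ of Theorem \ref{comb} to see that $I_{\lambda}$ is supported on $\pi_{\lambda}$, and derive the recursion $b_{\lambda}=b_{\mu}a_{\mu^{\sharp},\lambda^{\sharp}}$ from statement $4)$ to identify the coefficient by induction. You simply spell out the case analysis (regular vs.\ singular $\mu$, $r(\lambda)=1$ vs.\ $r(\lambda)\ge 2$) in more detail than the paper does, while leaving the same computational identity $a_{\mu,\lambda}=1$ as an asserted verification, exactly as the paper does.
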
 
\begin{proof}  First it is not difficult to verify that $a_{\lambda,\mu}=1$ if $\mu$ can be obtained from $\lambda$ by addind box to the first row. Then  formula (\ref{Pro}) follows  from  Theorem \ref{well}  statement $5)$ with some coefficient $b_{\lambda}$. If $\mu$ is the diagram which can be obtained from $\lambda$ by deleting one box from the first row then by Theorem \ref{comb}  statement $4)$ we have $F_{\lambda}(\mu^{\sharp})=\{\lambda^{\sharp}\}$. Therefore $b_{\lambda}=b_{\mu}a_{\mu^{\sharp},\lambda^{\sharp}}$ and we can apply inductive assumption.
\end{proof}
\begin{corollary}\label{formula}  Let $\lambda$ be a singular diagram and let us define by induction $\lambda^{s\sharp}=(\lambda^{(s-1)\sharp})^{\sharp}$. Then the following equality holds true
\begin{equation}\label{Pro}
J_{\lambda}=I_{\lambda}-b_{\lambda}I_{\lambda^{\sharp}}+b_{\lambda}b_{\lambda^{\sharp}}I_{\lambda^{2\sharp}}+\dots+(-1)^lb_{\lambda}b_{\lambda^{\sharp}}\dots b_{\lambda^{l\sharp}}I_{\lambda^{l\sharp}},
\end{equation}
where $l=\lambda'_j$.
\end{corollary}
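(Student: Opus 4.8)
The plan is to invert the relation supplied by Lemma \ref{project} and iterate it down the chain $X_{\lambda}=\{\lambda,\lambda^{\sharp},\lambda^{2\sharp},\dots\}$. For a singular diagram $\lambda$ with $\lambda_1>n$, Lemma \ref{project} reads $I_{\lambda}=J_{\lambda}+b_{\lambda}J_{\lambda^{\sharp}}$, which I would immediately rearrange into the single identity
$$
J_{\lambda}=I_{\lambda}-b_{\lambda}J_{\lambda^{\sharp}}
$$
that drives the whole argument. Before iterating it, I would record the structural facts that make the recursion terminate: using the cardinality lemma (the one asserting that $X_{\lambda}$ has $\lambda'_j+1$ elements) together with the relation $(\lambda^{\sharp})'_j=\lambda'_j-1$, the chain $\lambda,\lambda^{\sharp},\dots,\lambda^{l\sharp}$ has exactly $l+1$ terms with $l=\lambda'_j$, each intermediate diagram $\lambda^{s\sharp}$ for $0\le s<l$ is again singular with first row still exceeding $n$, and the terminal diagram $\lambda^{l\sharp}$ is regular. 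The regularity of the last term is exactly what closes the recursion: by Definition \ref{projective1} (the regular case of Lemma \ref{project}) one has $I_{\lambda^{l\sharp}}=J_{\lambda^{l\sharp}}$.

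The main body is then a finite telescoping, most cleanly organised as induction on $l=\lambda'_j$. The base case $l=0$ is the regular situation, where $J_{\lambda}=I_{\lambda}$ and there is nothing to prove. For the inductive step I would apply the displayed inversion once to $J_{\lambda}$ and then invoke the inductive hypothesis for $\lambda^{\sharp}$, whose controlling index is $l-1$, noting that $(\lambda^{\sharp})^{s\sharp}=\lambda^{(s+1)\sharp}$. Substituting the expansion of $J_{\lambda^{\sharp}}$ into $J_{\lambda}=I_{\lambda}-b_{\lambda}J_{\lambda^{\sharp}}$, multiplying through by $-b_{\lambda}$ shifts every sign by one and prepends a single factor $b_{\lambda}$ to each coefficient; after reindexing the summation this reassembles precisely the alternating sum (\ref{Pro}), with the empty product giving the leading term $I_{\lambda}$ and the product of all the $b$'s giving the terminal term $(-1)^{l}b_{\lambda}b_{\lambda^{\sharp}}\cdots I_{\lambda^{l\sharp}}$.

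The step requiring the most care is the bookkeeping rather than any analytic estimate. I must verify that every intermediate diagram $\lambda^{s\sharp}$ with $0\le s<l$ genuinely satisfies the hypotheses of Lemma \ref{project}, namely that it is singular and still has first row strictly larger than $n$, so that the inversion is legitimate at each stage; this is where the monotonicity $(\lambda^{s\sharp})'_j=\lambda'_j-s$ and the control of the first row under the $\sharp$ operation are needed. I must also track the indices on the coefficients $b_{\lambda^{s\sharp}}$ and the alternating signs, so that the telescoped products of $b$'s coincide exactly with those written in (\ref{Pro}). Since the diagrams $\lambda^{s\sharp}$ and their coefficients are already defined via the iterated $\sharp$ operation in Lemma \ref{project} and in the statement of the Corollary, this reduces to a routine but attentive induction with no genuine obstruction.
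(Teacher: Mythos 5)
Your proposal is correct and follows essentially the same route as the paper: the paper's proof simply writes out the chain of relations $I_{\lambda^{s\sharp}}=J_{\lambda^{s\sharp}}+b_{\lambda^{s\sharp}}J_{\lambda^{(s+1)\sharp}}$ from Lemma \ref{project} together with $I_{\lambda^{l\sharp}}=J_{\lambda^{l\sharp}}$ and back-substitutes, which is exactly your telescoping induction on $l=\lambda'_j$. Your bookkeeping also (correctly) shows the final coefficient should stop at $b_{\lambda^{(l-1)\sharp}}$, so the factor $b_{\lambda^{l\sharp}}$ appearing in the displayed formula \eqref{Pro} is a typo in the statement rather than a defect of your argument.
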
  
\begin{proof} We have 
$$
I_{\lambda}=J_{\lambda}+b_{\lambda}J_{\lambda^{\sharp}},\,\, I_{\lambda^{\sharp}}=J_{\lambda^{\sharp}}+b_{\lambda^{2\sharp}}J_{\lambda^{2\sharp}},\,\,\dots,\, I_{\lambda^{l\sharp}}=J_{\lambda^{l\sharp}}
$$
and Corollary follows.
\end{proof}

\section{Specialisation}

In order to   define  and calculate explicitly  polynomials   $SJ_{\lambda}(t)$  we need some additional preliminary results about rational functions.
 Let $\varphi(k,p)$ be a rational function of the form
$$
\varphi(k,p)=\frac{\prod_{i\in I}(p-\alpha_i)}{\prod_{j\in J}(p-\beta_j)}
$$
where $\alpha_i,\beta_j$ are linear functions in $k$.  We are going to calculate the following rational function 
$$
\varphi(t)=\lim_{k\to -1}\varphi(k,t(k+1))
$$

Let us represent $I=I_0\cup I_1$ where for $i\in I_0$ we have $\alpha_i=d_i(k+1)$ and for $i\in I_1$  $\alpha_i$ is not divisible on $k+1$. In the same way let us represent $J=J_0\cup J_1$ where for $j\in J_0$ we have $\beta_j=e_j(k+1)$ and for $j\in J_1$  $\beta_j$ is not divisible on $k+1$. So we can represent $\varphi(k,p)=\varphi_0(k,p)\varphi_1(k,p)$ where
$$
\varphi_0(k,p)=\frac{\prod_{i\in I_0}(p-\alpha_i)}{\prod_{j\in J_0}(p-\beta_j)},\quad \varphi_1(k,p)=\frac{\prod_{i\in I_0}(p-\alpha_i)}{\prod_{j\in J_0}(p-\beta_j)}.
$$ 
 
By definition the function  $\varphi_1(k,p)$ is well defined at the point $(-1,0)$.
 
 \begin{lemma}\label{special} `The following equalities hold true:
 
$1)$ if $\mid I_0\mid <\mid J_0\mid$ then $\varphi(t)=\infty$

$2)$ if $\mid I_0\mid >\mid J_0\mid$ then $\varphi(t)=0$

$3)$ if $\mid I_0\mid =\mid J_0\mid$  then
$$
\varphi(t)=\varphi_1(-1,0)\frac{\prod_{i \in I_0}(t-d_i)}{\prod_{j \in I_0}(t-e_j)}
 $$
\end{lemma}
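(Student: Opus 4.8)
The plan is to substitute $p=t(k+1)$ directly into the factorization $\varphi=\varphi_0\varphi_1$ and then analyze the two factors separately. The whole purpose of the splittings $I=I_0\cup I_1$ and $J=J_0\cup J_1$ is to isolate precisely those linear factors that degenerate along the line $p=t(k+1)$ as $k\to-1$, so the computation should reduce to counting powers of $(k+1)$.

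First I would dispose of $\varphi_1$. For $i\in I_1$ the function $\alpha_i$ is by definition not divisible by $k+1$, hence $\alpha_i(-1)\ne 0$, and likewise $\beta_j(-1)\ne 0$ for $j\in J_1$. Setting $p=t(k+1)$ and letting $k\to-1$ forces $p\to 0$, so every factor satisfies $p-\alpha_i\to -\alpha_i(-1)\ne 0$ and $p-\beta_j\to -\beta_j(-1)\ne 0$. Therefore $\varphi_1(k,t(k+1))$ converges to the finite, nonzero value $\varphi_1(-1,0)$, independently of $t$. This is the routine part.

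The substance lies in $\varphi_0$. For $i\in I_0$ we have $\alpha_i=d_i(k+1)$, so
$$
p-\alpha_i=t(k+1)-d_i(k+1)=(k+1)(t-d_i),
$$
and symmetrically $p-\beta_j=(k+1)(t-e_j)$ for $j\in J_0$. Factoring $(k+1)$ out of every factor yields
$$
\varphi_0(k,t(k+1))=(k+1)^{|I_0|-|J_0|}\,\frac{\prod_{i\in I_0}(t-d_i)}{\prod_{j\in J_0}(t-e_j)}.
$$
The three cases of the Lemma now read off directly from the exponent $|I_0|-|J_0|$: if it is negative the prefactor $(k+1)^{|I_0|-|J_0|}$ blows up and $\varphi(t)=\infty$, giving $1)$; if it is positive the prefactor tends to $0$ and $\varphi(t)=0$, giving $2)$; and if it vanishes the prefactor is identically $1$, so multiplying by $\lim\varphi_1=\varphi_1(-1,0)$ produces exactly the formula in $3)$.

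I do not expect a genuine obstacle here: the entire argument rests on the single observation that restricting to the diagonal line $p=t(k+1)$ converts each degenerate factor $p-\alpha_i$ (with $\alpha_i$ vanishing at $k=-1$) into $(k+1)$ times a surviving factor $t-d_i$. The only point that requires a little care is the bookkeeping, namely verifying that no factor coming from $\varphi_1$ accidentally vanishes in the limit (which is guaranteed by the very definition of $I_1$ and $J_1$) and that the residual rational function $\prod_{i\in I_0}(t-d_i)/\prod_{j\in J_0}(t-e_j)$ in case $3)$ is precisely what remains once the balanced powers of $(k+1)$ cancel.
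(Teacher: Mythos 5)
Your argument is correct and is exactly the computation the paper has in mind; the paper itself only says ``a proof easily follows from the definitions,'' and your write-up supplies precisely those details (splitting off $\varphi_1$, which stays finite and nonzero, and extracting the factor $(k+1)^{|I_0|-|J_0|}$ from $\varphi_0$ along the line $p=t(k+1)$). The only implicit caveat, which does not affect the verdict, is that cases $1)$ and $2)$ require $t$ to avoid the finitely many values $d_i$, $e_j$ where the residual rational function in $t$ itself vanishes or blows up.
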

\begin{proof} A proof easily follows from the definitions.
\end{proof}
\begin{remark} Suppose that function  $\varphi(-1,p)=f(p)$ is well defined and we know it
$
f(p)=p^c g(p)
$
 where $g(p)$ does not have zeros or poles at $p=0$. Therefore we see that $c=\mid I_0\mid -\mid J_0\mid$, and if $c=0$,   then $\varphi_1(-1,0)=g(-1)$ and 
$$
\varphi(t)=g(-1)\frac{\prod_{i \in I_0}(t-d_i)}{\prod_{j \in I_0}(t-e_j)}
$$
\end{remark}
\begin{remark} It is easy to check that
$$
\lim_{t\to\infty}\varphi(t)=\lim_{(p,q)\to (-1,0)}\lim_{k\to -1}\varphi(k,p,q)
$$
\end{remark}

For a function  $F(k)$  let us define  $\tilde F=\lim_{k\rightarrow-1}F(k)$. The following Theorem was proved in \cite{SZ}.

\begin{thm}\label{lim1} The following statements hold true

$1)$ If $\mu\in S^+(\lambda)$  then $\tilde a_{\lambda,\mu}=1$,

$2)$ 
 if  $\mu\in S^-(\lambda)$ and  $\mu_i=\lambda_i-1$ then 
$$
 \tilde a_{\lambda,\mu}=\frac{(2\tilde a_i+2\tilde h+p+2q)(2\tilde a_i+2\tilde h-p-2q-2)}{(2\tilde a_i+2\tilde h-1)^2}
 $$
 $$
 \frac{(2\tilde a_i+2\tilde h+p-1)(2\tilde a_i+2\tilde h-p-1)}{(2\tilde a_i+2\tilde h)(2\tilde a_i+2\tilde h-2)},
 $$

$3)$
$$
\tilde a_{\lambda,\lambda}=\frac{p(p+2q+1)}{2\tilde h-2l(\lambda)-1}+p-\sum_{i=1}^{l(\lambda)}\frac{2p(p+2q+1)}{(2\tilde a_i+2\tilde h-1)(2\tilde a_i+2\tilde h+1)}.
$$
\end{thm}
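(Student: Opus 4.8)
The three identities are the $k\to-1$ limits of the explicit rational functions for $a_{\lambda,\mu}$ and $a_{\lambda,\lambda}$ recorded in Section 2, so the plan is to substitute and pass to the limit factor by factor, keeping $p,q$ as free parameters. Since $m=1$ one has $\tilde a_i=\lambda_i-i$ and $\tilde h=1-n-\frac12p-q$. Because the integers $\lambda_i-i$ are strictly decreasing, each difference $a_i-a_j$ with $i\ne j$ tends to a nonzero number, and the factors $a_i+a_j+2h$, $a_i+h$, $a_i+h\pm\frac12$ tend to polynomials in $p,q$ that do not vanish identically; hence every factor occurring in $V_\mu(\lambda)$ has a well-defined limit as a rational function of $p,q$, and termwise passage to the limit is legitimate.

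For statements $1)$ and $2)$ I would begin from $a_{\lambda,\mu}=V_\mu(\lambda)\,J_\lambda(1)/J_\mu(1)$, regarded as a rational function of $k$ (equivalently one may start from the reduced forms (\ref{add}), (\ref{del})). After cancelling the common factors of $V_\mu(\lambda)$ and of the ratio $J_\lambda(1)/J_\mu(1)$, whose $C$-products differ only along the arm and leg of the single box in which $\lambda$ and $\mu$ differ, one passes to $k=-1$. The essential point is that all surviving products over rows $r\ne i$ then collapse, so that $\tilde a_{\lambda,\mu}$ depends only on the row $i$ of the changed box through $\tilde a_i$. Grouping the remaining factors in conjugate pairs produces the differences of squares $B^2-(p+2q+1)^2$, $B^2-p^2$, $B^2$ and $B^2-1$ with $B=2\tilde a_i+2\tilde h-1$; the right-hand side of $2)$ is precisely $\frac{B^2-(p+2q+1)^2}{B^2}\cdot\frac{B^2-p^2}{B^2-1}$, and in the adding case the same pairing makes numerator and denominator coincide, giving $\tilde a_{\lambda,\mu}=1$. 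This is already visible for $\lambda=\emptyset$, where $\tilde V_{(1)}(\emptyset)=\tilde J_{(1)}(1)$ and hence $\tilde a_{\emptyset,(1)}=\tilde V_{(1)}(\emptyset)/\tilde J_{(1)}(1)=1$.

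For statement $3)$ I would use the definition $a_{\lambda,\lambda}=-k^{-1}(2h+p+2q)-\sum_{\mu\in S(\lambda)\setminus\{\lambda\}}V_\mu(\lambda)$; here no $J(1)$-ratio is needed, since $a_{\lambda,\lambda}$ is expressed directly through the $V_\mu(\lambda)$. A direct computation gives $-k^{-1}(2h+p+2q)=2+2nk^{-1}$, whose limit is $2-2n$, so it remains to evaluate $\sum_{\mu\ne\lambda}\tilde V_\mu(\lambda)$. Each $\tilde V_\mu(\lambda)$, for $\mu$ obtained by adding or removing a box, is read off from the product formulae as a finite product. The remaining task is to sum these contributions and, using the partial fraction $\frac{2p(p+2q+1)}{(2\tilde a_i+2\tilde h-1)(2\tilde a_i+2\tilde h+1)}=p(p+2q+1)\bigl(\frac{1}{2\tilde a_i+2\tilde h-1}-\frac{1}{2\tilde a_i+2\tilde h+1}\bigr)$, to reorganize the total into the asserted expression $\frac{p(p+2q+1)}{2\tilde h-2l(\lambda)-1}+p-\sum_{i=1}^{l(\lambda)}\frac{2p(p+2q+1)}{(2\tilde a_i+2\tilde h-1)(2\tilde a_i+2\tilde h+1)}$.

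The main obstacle is the bookkeeping of these collapses. For $1)$ and $2)$ the delicate point is twofold: individual factors of $J_\lambda(1)$ may vanish at $k=-1$ — the factor $c^0_\lambda(\square,h+\frac12p+q)$ degenerates to $j-i+1-n$, which vanishes on the diagonal $j-i=n-1$, the very locus producing singular diagrams — so one must check that these zeros and poles cancel between $V_\mu(\lambda)$ and the $J(1)$-ratio before the limit is taken; and the multi-row products must be shown to telescope down to the single-box form that yields statement $2)$. For $3)$ the obstacle is combinatorial: matching the sum of the addable- and removable-box terms $\tilde V_\mu(\lambda)$ against the telescoping-type right-hand side, i.e. collecting the contributions of adding at the end of, and removing from, each row $i$ correctly across all $i$.
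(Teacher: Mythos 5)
The first thing to note is that the paper contains no proof of Theorem \ref{lim1} at all: it is imported from the reference \cite{SZ} with the single sentence ``The following Theorem was proved in \cite{SZ}'', so there is no in-paper argument to compare yours against. Your route --- substitute $k=-1$ into the explicit product formulae for $V_\mu(\lambda)$, $J_\lambda(1)/J_\mu(1)$ (equivalently into (\ref{add}), (\ref{del})) and pass to the limit factor by factor --- is the only reasonable one, and the checkpoints you verify are correct: with $m=1$ one has $\tilde a_i=\lambda_i-i$ and $\tilde h=1-n-\tfrac12p-q$; the right-hand side of $2)$ is exactly $\frac{B^2-(p+2q+1)^2}{B^2}\cdot\frac{B^2-p^2}{B^2-1}$ with $B=2\tilde a_i+2\tilde h-1$; the term $-k^{-1}(2h+p+2q)$ equals $2+2nk^{-1}$ and tends to $2-2n$; the identity $V_{(1)}(\emptyset)=J_{(1)}(1)$ holds identically in $k$, not only in the limit; and $c^0_\lambda(\square,h+\tfrac12p+q)$ does degenerate to $j-i+1-n$ at $k=-1$ independently of $p,q$, so the cancellation of these zeros against the ones in $V_\mu(\lambda)$ genuinely must be checked rather than assumed.

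That said, what you have written is a strategy rather than a proof, and the deferred ``bookkeeping'' is where the entire content of the theorem sits. For $1)$ and $2)$ the assertion that ``all surviving products over rows $r\ne i$ collapse'' so that $\tilde a_{\lambda,\mu}$ depends on the changed box only through $\tilde a_i$ is precisely what has to be proved: at $k=-1$ the factors $a_r+a_i+2h$, $a_r+j+k\lambda'_j+2h-1$, $a_i+k\lambda'_s+s+2h-1$ for the various $r,s$ are distinct integers plus linear terms in $p,q$, and their mutual cancellation (partly within (\ref{add})--(\ref{del}), partly against the arm/leg factors of $J_\lambda(1)/J_\mu(1)$) depends on the shape of $\lambda$ in a way you describe but never demonstrate. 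For $3)$ the substantive step is to show that $\sum_{\mu\ne\lambda}\tilde V_\mu(\lambda)$ combines with $2-2n$ to yield an expression every term of which is divisible by $p$ --- a massive cancellation that your partial-fraction identity is meant to organize but which you do not carry out, even in a small example such as $\lambda=(1)$. So the proposal correctly identifies the computation that \cite{SZ} must perform and passes every spot-check, but it stops short of the actual verification for statements $2)$ and $3)$.
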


\begin{lemma} Let $\lambda,\mu$ be such diagrams that  $\mu=\lambda\setminus (i,j),\,\,1\le j\le n$.  

$1)$ If $\lambda_1>n,$ and $\mu_1-n=\mu'_r+n-r,\,\lambda'_r>1$ for some $1\le r\le n$  then
$$
a_{\lambda,\mu}(t)=\begin{cases}\displaystyle\frac{t-\lambda'_j+2}{t-\lambda'_j+1},\, \text{if}\,\,r=j\\
\quad1\,\,\quad\quad\quad\text{if}\,\,\,\,r>j
\end{cases}
$$

$2)$ If $\lambda_1\le n$ and  $i=1$   then
$$
a_{\lambda,\mu}=\begin{cases}\displaystyle\frac{2}{t},\,\,\text{if}\,\, j=n\\
1\,\,\text{if}\,\,j<n
\end{cases}
$$
\end{lemma}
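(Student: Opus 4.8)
The plan is to compute $a_{\lambda,\mu}(t)=\lim_{k\to-1}a_{\lambda,\mu}(k,t(k+1),0)$ directly from the explicit deletion formula (\ref{del}), combining Theorem \ref{lim1}, Lemma \ref{special} and its Remark with the combinatorics of the removed corner box. First I would record the dictionary for one linear factor. Every factor in (\ref{del}) is affine in $(k,p)$, say $L=A+Bk+Cp$ with $C\in\{0,\pm\tfrac12,\pm1\}$; substituting $p=t(k+1)$ gives $L(-1,0)=A-B$, so such a factor is regular and contributes the value $A-B$ when $A\ne B$, while if $A=B$ it equals $(B+Ct)(k+1)$ and contributes the reduced coefficient $B+Ct$ once a power of $(k+1)$ has been extracted. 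Thus $a_{\lambda,\mu}(t)$ is controlled by the factors that vanish at $(-1,0,0)$ together with the bookkeeping of the powers of $(k+1)$.

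Next I would pin down the regular part using Theorem \ref{lim1}. Set $B_0:=2\tilde a_i+2\tilde h+p=2(\lambda_i-i+1-n)=2(j-i+1-n)$, which is independent of $p$ because $\tilde h=1-n-\tfrac12p$, and where I use that $(i,j)$ is a removable corner, so $\lambda_i=j$ and $\lambda'_j=i$. After setting $q=0$, part $2)$ of Theorem \ref{lim1} reads
$$\tilde a_{\lambda,\mu}(p)=\frac{B_0(B_0-2p-2)(B_0-1)(B_0-2p-1)}{(B_0-p-1)^2(B_0-p)(B_0-p-2)},$$
which collapses to $1$ at $p=0$ and is regular there unless $B_0\in\{0,2\}$ (the value $B_0=1$ is excluded by parity). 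In Part $1)$ one has $i\ge2$ and $\lambda_i=j\le n$, so $B_0\le-2$; in Part $2)$ with $i=1$ one has $B_0=2(j-n)$, negative when $j<n$ and zero when $j=n$. Hence in every case except $j=n$ the function $\tilde a_{\lambda,\mu}(p)$ is regular at $p=0$ with value $1$, so in the notation of the Remark $c=0$ and $g(0)=1$, and therefore $a_{\lambda,\mu}(t)=\prod_{i\in I_0}(t-d_i)/\prod_{j\in J_0}(t-e_j)$.

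It then remains to locate the vanishing factors. Using the corner identities and the singular relation $\mu_1-n=\mu'_r+n-r$ with $\mu_1=\lambda_1$, I would show that in Part $1)$ with $r=j$, where the relation becomes $\lambda_1=i-1+2n-j$, exactly two factors degenerate: the $r{=}1$ term of $\prod_{r=1}^{i-1}$ in (\ref{del}), a numerator factor of slope $d=i-2=\lambda'_j-2$, and the $r{=}1$ term of $\prod_{r\ne i}$, a denominator factor of slope $e=i-1=\lambda'_j-1$; by the dictionary their signs cancel and they yield $a_{\lambda,\mu}(t)=(t-\lambda'_j+2)/(t-\lambda'_j+1)$. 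For $r>j$ the removability of $(i,j)$ forces $\lambda'_{j+1}<\lambda'_j$, hence $\lambda'_r<i$, and a short estimate shows that under $\lambda_1=\lambda'_r+2n-r$ no affine factor vanishes; together with $B_0\le-2$ this gives $a_{\lambda,\mu}(t)=1$. The same check (and $B_0<0$) gives $a_{\lambda,\mu}(t)=1$ in Part $2)$ when $j<n$.

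The main obstacle is the degenerate case $B_0=0$, that is Part $2)$ with $j=n$, where $\tilde a_{\lambda,\mu}(p)\equiv0$ and the Remark does not apply. Here I would return to (\ref{del}) and blow up by hand: for $i=1,\ j=n,\ \lambda_1=n$ the factor $1/(a_1+h)$ equals $1/(-\tfrac12\,t(k+1))$, a first order pole in $k+1$, while the factor $j-1+k(i-1)+h+\tfrac12p$ reduces to $-(k+1)$ and is $p$-free, so it vanishes to first order; these two cancel their powers of $(k+1)$ and their reduced coefficients produce the factor $2/t$. One then verifies that the product of all remaining factors of (\ref{del}), together with the $p$-independent factor $a^{(1)}_{\lambda,\mu}$, is regular at $k=-1$ with value $1$, giving $a_{\lambda,\mu}(t)=2/t$. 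The delicate points are precisely this matching of a $p$-free vanishing factor against a simple pole, which the $\tilde a$-formula cannot detect, and the routine but careful confirmation that, apart from the resonant pairs identified above, no further factor of (\ref{del}) degenerates under the stated combinatorial hypotheses.
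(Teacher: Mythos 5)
Your proposal is correct and follows essentially the same route as the paper: evaluate $a_{\lambda,\mu}$ from the explicit deletion formula (\ref{del}), use Lemma \ref{special} and its Remark together with the value $(a_{\lambda,\mu})(-1,0,0)=1$ from Theorem \ref{lim1} to reduce everything to the factors vanishing at $(-1,0)$, and identify the single resonant numerator/denominator pair (the paper's zero at $a_1+j+k\lambda'_j+2h-k-1$ and pole at $a_i+a_1+2h$) to obtain $(t-\lambda'_j+2)/(t-\lambda'_j+1)$, checking that nothing else degenerates when $r>j$. Your handling of part $2)$ with $j=n$ --- where the $p$-free factor $-(k+1)$ makes $\tilde a_{\lambda,\mu}(p)\equiv 0$, so the Remark is inapplicable and one must blow up by hand and confirm the residual product (including $a^{(1)}_{\lambda,\mu}$) equals $1$ --- is actually more explicit than the paper, which disposes of part $2)$ with ``can be proved in the same manner.''
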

\begin{proof}  Let us prove statement $1)$.  By Lemma \ref{special}  we need to calculate $(a_{\lambda,\mu})(-1,0,0)$ and  numbers $d,e$.   But   $(a_{\lambda,\mu})(-1,0,0)=1$  by Theorem 2  from  \cite{SZ}. Let us  consider all factors in $a_{\lambda,\mu}$ which depend on $p$. Let $(i,j)$ be the box such that $\mu=\lambda\setminus(i,j)$. Then we have $i=\lambda'_j,\, j=\lambda_i$. Since  $i>1$ then  it is not difficult to verify that  $a_{\lambda,\mu}$ has  one pole at  
 $a_i+a_1+2h$ and one zero at  $a_1+j+k\lambda'_j+2h-k-1$. Therefore we have 
 $$
 a_{\lambda,\mu}(t)=\frac{a_1+j+k\lambda'_j+2h(t)-k-1}{a_i+a_1+2h(t)}=\frac{(\mu'_j-1-t)(k+1)}{(\mu'_j-t)(k+1)}
 $$
If $r>j$ then at the point $(-1,0,0)$ we have 
$$
a_i+a_1+2h= \lambda_i-i+\lambda_1+1-2n=j-\lambda'_j+\mu'_r-r+1=j-r+\mu'_j-\mu'_r<0 
$$
Therefore  at the same point we have 
$$
\prod_{s\ne i}^{l(\lambda)}\frac{(a_i+a_s+2h+k)}{(a_i+a_s+2h)}\ne0
$$
Further  at the point $(-1,0,0)$ we have 
$$
a_1+j+k\lambda'_j+2h-k-1=\lambda_1-\lambda'_j+j+1-2n=\mu'_r-\mu'_j+j-r<0
$$
Therefore  at the same point we have 
$$
\prod_{s=1}^{i-1}\frac{a_s+j+k\lambda'_j+2h-k-1}{a_s+j+k\lambda'_j+2h-1}\ne0
$$
Also at the point $(-1,0,0)$ we have 
$$
a_i+j+k\lambda'_j+2h-1=2(\lambda_i-i-n)+1\le -3
$$
Therefore  at the same point we have 
$$
\prod_{s=1}^{i-1}\frac{a_s+j+k\lambda'_j+2h-2}{a_s+j+k\lambda'_j+2h-1}\ne0
$$
And it is also easy to check that all other factors strictly less then zero. Therefore  we proved the first statement.
The second  statement  can be proved in the same manner. 
\end{proof}

\begin{lemma}\label{koef} Let $\lambda$ be a  singular diagram  such that $\lambda_1-n=\lambda'_j+n-j$ then 
\begin{equation}
b_{\lambda}(t)=\begin{cases} \displaystyle\frac{2}{t}\,\,\text{if}\,\,\lambda'_j=1,\\
\displaystyle\frac{t-\lambda_j'+2}{t-\lambda'_j+1},\,\,\text{if}\,\,\lambda'_j>1
\end{cases}
\end{equation}
\end{lemma}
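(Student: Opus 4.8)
The plan is to evaluate $b_{\lambda}(t)$ directly from its definition in Lemma \ref{project}, as the specialisation of the product $b_{\lambda}=a_{\lambda^{(0)}\lambda^{(1)}}a_{\lambda^{(1)}\lambda^{(2)}}\cdots a_{\lambda^{(r-1)}\lambda^{(r)}}$ with $r=r(\lambda)$, by computing each single-box factor $a_{\lambda^{(s-1)}\lambda^{(s)}}(t)$ with the preceding lemma. Writing $p=\lambda'_j$, I would first record the shape of the chain: $\lambda^{(0)}$ has first row $\lambda_1-r=p+2n-j-r$, and for $s=1,\dots,r$ the step $\lambda^{(s-1)}\to\lambda^{(s)}$ deletes the box of row $p$ in column $j+r-s$, so the boxes leave row $p$ from right to left. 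All these columns are $\le n$, so the preceding lemma applies to each factor, and the computation splits according to whether $p=1$ or $p>1$.

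If $\lambda'_j=1$ then $r=n-j+1$ and $\lambda^{(0)}_1=n$, and the row of index $p=1$ is the first row, so the chain successively removes $(1,n),(1,n-1),\dots,(1,j)$ while $\lambda^{(s-1)}_1=n-(s-1)\le n$. Statement $2)$ of the preceding lemma then gives $\frac{2}{t}$ for the first factor (deleted column $j+r-1=n$) and $1$ for each later factor (deleted column $<n$), whence $b_{\lambda}(t)=\frac{2}{t}$.

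If $\lambda'_j>1$ then $\lambda^{(s-1)}_1=p+2n-j-r>n$ for every $s$ (using $\lambda_p=j+r-1\le n$, which for $p\ge2$ is where $\lambda\in H(1,n)$ enters), so statement $1)$ is the relevant one. I would show that each $\mu=\lambda^{(s)}$ with $s\ge1$ is singular through exactly one column, namely $c=j+r-1$; this follows by testing the equation $\mu'_c=\mu_1-2n+c$ separately on the columns lowered by the row-$p$ deletions, on those to their left, and on those to their right. Since the box deleted at step $s$ lies in column $b=j+r-s$, for $s=1$ we have $b=c$ and $(\lambda^{(0)})'_b=p=\lambda'_j$, so the factor is $\frac{t-\lambda'_j+2}{t-\lambda'_j+1}$, while for $2\le s\le r$ we have $c>b$ and the factor is $1$. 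Multiplying yields $b_{\lambda}(t)=\frac{t-\lambda'_j+2}{t-\lambda'_j+1}$.

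The step I expect to be the real obstacle is the borderline value $\lambda'_j=2$ inside the factors with $s\ge2$: there the column $c=j+r-1$ has already been lowered to height $p-1=1$ in $\lambda^{(s-1)}$, so the hypothesis ``$\lambda'_c>1$'' of statement $1)$ fails and the preceding lemma cannot be quoted verbatim. I would dispose of these factors by hand. By Theorem \ref{lim1} (equivalently \cite{SZ}) one has $a_{\lambda^{(s-1)}\lambda^{(s)}}(-1,0,0)=1$, and inspecting the defining product (\ref{del}) shows that, when the deleted box lies off the singular column, none of its $p$-dependent factors is divisible by $k+1$; hence in the notation of Lemma \ref{special} one has $|I_0|=|J_0|=0$, the specialisation introduces no dependence on $t$, and the factor equals its value $1$ at $(-1,0,0)$. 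This settles the exceptional factors and completes the evaluation of the product.
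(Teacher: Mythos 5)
Your proof is correct and follows the same route as the paper, whose entire argument for this lemma is the single sentence ``It easily follows from the previous Lemma'': you expand that into the factor-by-factor evaluation of $b_{\lambda}=a_{\lambda^{(0)}\lambda^{(1)}}\cdots a_{\lambda^{(r-1)}\lambda^{(r)}}$ that the authors clearly intend. Your observation that for $\lambda'_j=2$ and $s\ge 2$ the hypothesis $\lambda'_c>1$ of statement $1)$ of the preceding lemma fails (the singular column $c=j+r-1$ having already been lowered to height $1$) is a genuine subtlety the paper silently skips, and your patch --- checking directly from (\ref{del}) that no $p$-dependent factor vanishes at $(-1,0)$, so that $|I_0|=|J_0|=0$ and the factor equals its value $1$ at $(-1,0,0)$ --- is sound.
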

\begin{proof} It easily follows from the previous Lemma.
\end{proof}
\begin{definition} For any $t\in\Bbb C$  and $\lambda\in H(1,n)$ let us define
$$
SJ_{\lambda}(t)=\lim_{k\to -1}J_{\lambda}(k,t(k+1),0),\,\,\, SI_{\lambda}(t)=\lim_{k\to -1}I_{\lambda}(k,t(k+1),0)
$$

\end{definition}

\begin{corollary}\label{formulat} Let $\lambda\in H(1,n)$. Then

$1)$  $SI_{\lambda}(t)$ does not depend on $t$

$2)$ If $\lambda$ is a regular diagram then $SJ_{\lambda}(t)$ does not depend on $t$.

$2)$ If  $\lambda$ is a singular diagram then for  $t\notin \Bbb Z_{\ge0}$ the polynomial $SJ_{\lambda}(t)$ is well defined and  we have the following equality
\begin{equation}
SJ_{\lambda}(t)= SI_{\lambda}-\frac{t-l+2}{t-l+1} SI_{\lambda^{\sharp}}+\frac{t-l+3}{t-l+1} SI_{\lambda^{2\sharp}}+\dots+(-1)^{l}\frac{2}{t-l+1} SI_{\lambda^{l\sharp}}
\end{equation}
where $l=\lambda'_j$.
\end{corollary}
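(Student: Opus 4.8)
The plan is to establish the three claims in turn, deriving each from the regularity of the $I_\lambda$ (Theorem \ref{well}), the change of basis in Corollary \ref{formula}, and the explicit limits in Lemma \ref{koef}. First I would dispatch (1) and (2). Since $q=0$ is held fixed, $I_\lambda(k,p,0)$ is a Laurent polynomial in $x,y$ whose coefficients are rational functions of $(k,p)$; by Theorem \ref{well} each such coefficient is regular, hence continuous, at $(k,p)=(-1,0)$. Along the curve $p=t(k+1)$ the point $(k,t(k+1))$ tends to $(-1,0)$ as $k\to-1$, so the limit equals the value $I_\lambda(-1,0,0)$, which does not involve $t$; thus $SI_\lambda(t)=I_\lambda(-1,0,0)$ is independent of $t$, proving (1). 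For (2), a regular $\lambda$ satisfies $I_\lambda=J_\lambda$: if $\lambda_1\le n$ this is the definition of $I_\lambda$, and if $\lambda_1>n$ it is the first case of Lemma \ref{project}. Hence $SJ_\lambda(t)=SI_\lambda(t)$, and (2) follows from (1).

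The content is in (3). I would begin from Corollary \ref{formula}, written as
$$
J_\lambda=\sum_{s=0}^{l}(-1)^s\Big(\prod_{r=0}^{s-1}b_{\lambda^{r\sharp}}\Big)I_{\lambda^{s\sharp}},\qquad l=\lambda'_j,
$$
an identity of rational functions in $(k,p)$. Putting $p=t(k+1)$ and letting $k\to-1$, each factor has its own limit: $I_{\lambda^{s\sharp}}\to SI_{\lambda^{s\sharp}}$ by (1), and $b_{\lambda^{r\sharp}}\to b_{\lambda^{r\sharp}}(t)$ by Lemma \ref{koef}. Termwise passage to the limit then gives $SJ_\lambda(t)=\sum_{s}(-1)^s\big(\prod_{r<s}b_{\lambda^{r\sharp}}(t)\big)SI_{\lambda^{s\sharp}}$, provided the coefficients remain finite.

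It remains to evaluate $\prod_{r=0}^{s-1}b_{\lambda^{r\sharp}}(t)$. I would use that along the chain $\lambda,\lambda^\sharp,\dots,\lambda^{l\sharp}$ the singular column index stays equal to $j$ while the column height drops by one at each step, i.e. $(\lambda^{r\sharp})'_j=l-r$ (exactly what the proof of the lemma that $X_\lambda$ has $\lambda'_j+1$ elements records, with $\lambda^{l\sharp}$ the first regular diagram, so the sum stops at $s=l$). By Lemma \ref{koef}, $b_{\lambda^{r\sharp}}(t)=\frac{t-l+r+2}{t-l+r+1}$ for $r<l-1$, while $b_{\lambda^{(l-1)\sharp}}(t)=\frac{2}{t}$. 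For $s\le l-1$ all factors are of the first type and telescope to $\frac{t-l+s+1}{t-l+1}$; for $s=l$ the product of the first $l-1$ factors telescopes to $\frac{t}{t-l+1}$, which multiplied by the final $\frac{2}{t}$ gives $\frac{2}{t-l+1}$. Inserting these values reproduces the asserted formula.

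For well-definedness, after telescoping every coefficient has denominator $t-l+1$ with a single pole at $t=l-1\in\Bbb Z_{\ge0}$, and each individual $b_{\lambda^{r\sharp}}(t)$ has its pole in $\{0,1,\dots,l-1\}\subset\Bbb Z_{\ge0}$; hence for $t\notin\Bbb Z_{\ge0}$ all limits are finite and the termwise passage above is legitimate, so $SJ_\lambda(t)$ is well defined. The main obstacle I anticipate is the bookkeeping in this last step: verifying that the singular index is genuinely constant along the chain (so that Lemma \ref{koef} applies with $(\lambda^{r\sharp})'_j=l-r$ at every stage), that the telescoping correctly absorbs the exceptional last factor $\frac{2}{t}$, and that the interchange of limit with the finite sum is justified exactly on the locus $t\notin\Bbb Z_{\ge0}$.
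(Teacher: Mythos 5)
Your proof is correct and follows the same route as the paper's, which merely cites Lemma \ref{special}, the identity $J_\lambda=I_\lambda$ for regular $\lambda$, Corollary \ref{formula} and Lemma \ref{koef}; your telescoping computation of the products $\prod_{r}b_{\lambda^{r\sharp}}(t)$ supplies exactly the detail the paper omits, and your bookkeeping of the heights $(\lambda^{r\sharp})'_j=l-r$ and of the pole locations matches what is needed. The only point to tighten is in part (1): for a rational function of $(k,p)$ the phrase ``no pole at $(-1,0)$'' must be read, as in Lemma \ref{special}, as saying that no linear factor of the denominator vanishes there (i.e.\ $J_0=\emptyset$) --- a bare appeal to continuity is not enough, since a $0/0$ indeterminacy at the point would make the limit along $p=t(k+1)$ depend on $t$, and it is precisely the factored form established in Theorem \ref{well} that rules this out.
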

\begin{proof} Let us prove the first statement. Polynomial $I_{\lambda}$ is well defined at the point $(-1,0,0)$. Therefore  in the notations of the Lemma \ref{special} for any its coefficient $\varphi(k,p,q)$ we have $J_0=\emptyset$. Therefore we always have $|I_0|=|J_0|$ or $|I_0|>|J_0|$. So by Lemma \ref{special} we have  $\varphi(t)=const$ in the first case and $\varphi(t)=0$ in the second case and we proved the first statement.

By Theorem \ref{well} and  from the Definition \ref{projective1} it follows that for regular $\lambda$ we have $J_{\lambda}=I_{\lambda}$ and we get the second statement.

The third statement follows  from the  Corollary \ref{formula} and  Lemma  \ref{koef}. 

\end{proof}
Let us denote by $SJ_{\lambda}(\infty)$ the limit $SJ_{\lambda}(t)$ when $t\to\infty$ and from the Corollary \ref{formulat} we see that
\begin{equation}
SJ_{\lambda}(\infty)=SI_{\lambda}-SI_{\lambda^{\sharp}}+ SI_{\lambda^{2\sharp}}+\dots+(-1)^{l-1} SI_{\lambda^{(l-1)\sharp}}
\end{equation}

And  for  singular diagram we have 
\begin{equation}\label{projective}
SI_{\lambda}=\begin{cases} SJ_{\lambda}(\infty),\,\,\text{if}\,\,\lambda'_j=1\\
SJ_{\lambda}(\infty)+SJ_{\lambda^{\sharp}}(\infty),\,\,\text{if}\,\,\lambda'_j>1
\end{cases}
\end{equation}
From the previous formulae it is also easy  to deduce that
\begin{gather}
SJ_{\lambda}(t)=SJ_{\lambda}(\infty)-
\frac{1}{t-l+1}SJ_{\lambda^{\sharp}}(\infty)+\frac{1}{t-l+1}SJ_{\lambda^{2\sharp}}(\infty)\label{infty1}\\
+\dots+(-1)^{l-1} \frac{1}{t-l+1}SJ_{\lambda^{(l-1)\sharp}}(\infty)+(-1)^{l} \frac{2}{t-l+1}SJ_{\lambda^{l\sharp}}(\infty)\notag
\end{gather}

\section{Supercharacters}
Let $\pm\varepsilon,\pm\delta_1,\dots,\pm\delta_n$ be the weights of identical representation of the Lie superalgebra $\frak{osp}(2,2n)$.
The root system of the Lie superalgebra $\frak{osp}(2,2n)$ is 
$$
R_0=\{ \pm\delta_i\pm\delta_j,\,\, i<j,\,\pm2\delta_i\},\,\,\, R_1=\{\pm\varepsilon\pm\delta_i,\,\}
$$
with the bilinear form
$$
(\varepsilon,\varepsilon)=1,\,(\delta_j,\delta_j)=-1, \, (\delta_i,\delta_j)=0,\,i\ne j,\,(\varepsilon,\delta_i)=0,\,1\le i,j\le n
$$
The Weyl group $W_0$  is semi-direct product of $S_n$ and $\Bbb Z_2^n$. It acts on  the wights by permuting and changing the signs of $\delta_j,\,j=1,\dots,n$.
Let us chose the following system of simple roots
$$
B=\{\varepsilon-\delta_1,\delta_1-\delta_2,\dots,\delta_{n-1}-\delta_{n}, 2\delta_n\}, 
$$
We will consider only integer weights
$$
P=\{ \chi=\chi_0\varepsilon+\sum_{j=1}^n\chi_j\delta_j,\chi_0,\chi_j\in \Bbb Z\}
$$
and the subset of the  highest weights
$$
P^+=\{ \chi=\chi_0\varepsilon+\sum_{j=1}^n\chi_j\delta_j,\,\,\chi_1\ge\chi_2\dots\ge \chi_n\ge 0\}
$$

We will denote for any $\chi\in P^+$ by $L(\chi)$  the corresponding finite dimensional irreducible module and by $K(\chi)$ the corresponding Kac module (see (\cite{VDJ})).
We also set
$$
\rho_0=\sum_{i=1}^n(n+1-i)\delta_i,\,\,\rho_1=n\varepsilon,\,\, \rho=\rho_0-\rho_1
$$
$$
L_0=\prod_{\alpha\in R^{+}_0}(e^{\frac12\alpha}-e^{-\frac12\alpha}),\quad L_1=\prod_{\alpha\in R^{+}_1}(e^{\frac12\alpha}-e^{-\frac12\alpha})
$$
A highest weight $\chi$ is called typical if $(\chi+\rho,\alpha)\ne0$ for any $\alpha\in R^+_1$. A highest weight $\chi$ is called atypical if $(\chi+\rho,\alpha)=0$ for some $\alpha\in R^+_1$. In the case of Lie superalgebra $\frak{osp}(2,2n)$ there is at most one such $\alpha\in R^+_1$.  
 We need the following formula for supercharacter of irreducible module $L(\chi)$ by Van Der Jeugt \cite{VDJ}.
 Let us denote by  $\{f\}$   the alternation operation  over $W_0$.
 
 For any $\chi\in P$  we set
$$
 K_{\chi}=e^{\chi+\rho_0}\prod_{\alpha\in R_1^{+}}(1-e^{-\alpha}),
$$
and if $(\chi+\rho,\alpha)=0$ then we set 
$$
 K^{\alpha}_{\chi}=e^{\chi+\rho_0}\prod_{\alpha\in R_1^{+}\setminus\alpha}(1-e^{-\alpha}). 
$$

\begin{lemma}\label{trans}(see \cite{VDJ}) The following equalities hold true:

$1)$  If $\chi\in P^+$ is typical then
\begin{equation}\label{Weyl1}
L_0\,sch\,L(\chi)=\{K_{\chi}\}
\end{equation}
$2)$  If $\chi\in P^+$ is atypical  such that  $(\chi+\rho,\alpha)=0,\,\alpha\in R^{+}_1$ then
\begin{equation}\label{Weyl1}
L_0\,sch\,L(\chi)=\{K_{\chi}^{\alpha}\}
\end{equation}
\end{lemma}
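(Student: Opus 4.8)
The plan is to reduce the whole statement to the supercharacter of the Kac module $K(\chi)$, for which a closed formula is available independently of (a)typicality, and only then to pass to $L(\chi)$: in the typical case via Kac's irreducibility criterion, and in the atypical case via the composition series of $K(\chi)$. The pivotal identity, which I would prove for \emph{every} $\chi\in P^+$, is
\[
L_0\,sch\,K(\chi)=\{K_{\chi}\}.
\]
To obtain it, recall that $\frak{osp}(2,2n)$ is of type I, hence carries a consistent $\Bbb Z$-grading $\frak g=\frak g_{-1}\oplus\frak g_{0}\oplus\frak g_{1}$ and the Kac module is $K(\chi)=\mathrm{Ind}_{\frak g_0\oplus\frak g_1}^{\frak g}L_0(\chi)$, where $L_0(\chi)$ is the irreducible $\frak g_0$-module of highest weight $\chi$ inflated trivially along $\frak g_1$. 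As a $\frak g_0$-module $K(\chi)\cong\Lambda(\frak g_{-1})\otimes L_0(\chi)$, and since $\frak g_{-1}$ is odd abelian with weights $\{-\alpha\mid\alpha\in R_1^{+}\}$ its supercharacter is $\prod_{\alpha\in R_1^{+}}(1-e^{-\alpha})$. Therefore $sch\,K(\chi)=ch\,L_0(\chi)\cdot\prod_{\alpha\in R_1^{+}}(1-e^{-\alpha})$.

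I would then apply the Weyl character formula in the form $L_0\,ch\,L_0(\chi)=\{e^{\chi+\rho_0}\}$ together with the observation that the odd product is $W_0$-invariant: indeed $W_0=S_n\ltimes\Bbb Z_2^{n}$ only permutes the $\delta_i$ and flips their signs, and this permutes the set $R_1^{+}=\{\varepsilon\pm\delta_i\}$ among itself. Consequently the product may be carried inside the alternation, giving $L_0\,sch\,K(\chi)=\{e^{\chi+\rho_0}\prod_{\alpha\in R_1^{+}}(1-e^{-\alpha})\}=\{K_{\chi}\}$, as claimed. For typical $\chi$ Kac's theorem yields $K(\chi)=L(\chi)$, and statement $1)$ is immediate.

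For statement $2)$ let $\chi$ be atypical with $(\chi+\rho,\alpha)=0$ for the unique $\alpha\in R_1^{+}$, say $\alpha=\varepsilon-\delta_k$ (the root $\varepsilon+\delta_k$ is symmetric). Because the atypicality is exactly one, $K(\chi)$ has precisely two composition factors and sits in a short exact sequence $0\to L(\mu)\to K(\chi)\to L(\chi)\to0$ with $\mu$ the atypical weight linked to $\chi$, so $sch\,K(\chi)=sch\,L(\chi)+sch\,L(\mu)$. Factoring out the singular root, $K_{\chi}=(1-e^{-\alpha})K_{\chi}^{\alpha}$, whence
\[
\{K_{\chi}\}=\{K_{\chi}^{\alpha}\}-\{e^{-\alpha}K_{\chi}^{\alpha}\}.
\]
The even reflection $s_{2\delta_k}\in W_0$ sends $\delta_k\mapsto-\delta_k$, swapping $\varepsilon-\delta_k\leftrightarrow\varepsilon+\delta_k$ and fixing the remaining odd roots; applying it inside the second alternation (contributing the sign $\epsilon(s_{2\delta_k})=-1$) and using $(\chi+\rho,\alpha)=0$ to rewrite the shifted weight $s_{2\delta_k}(\chi+\rho_0)-(\varepsilon+\delta_k)$, I would identify $\{e^{-\alpha}K_{\chi}^{\alpha}\}=-\{K_{\mu}^{\alpha}\}$, while the inductive hypothesis applied to the strictly lower weight $\mu$ gives $\{K_{\mu}^{\alpha}\}=L_0\,sch\,L(\mu)$. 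Comparing with $\{K_{\chi}\}=L_0\,sch\,K(\chi)=L_0\,sch\,L(\chi)+L_0\,sch\,L(\mu)$ then forces $\{K_{\chi}^{\alpha}\}=L_0\,sch\,L(\chi)$, which is statement $2)$.

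The hard part will be the atypical bookkeeping just sketched: one must (i) establish that $K(\chi)$ has exactly the two claimed composition factors, a type-I feature resting on Kac's determinant computation for $\frak{osp}(2,2n)$ together with the fact that at most one odd root is singular, and (ii) verify that antisymmetrizing the shifted weight reproduces $\mu+\rho_0$ up to a Weyl element, discarding precisely those terms that become $W_0$-singular because $(\chi+\rho,\alpha)=0$. This linkage calculation—the technical core of \cite{VDJ}—is most cleanly organized as a downward induction along the finite chain of linked atypical weights $\chi>\mu>\dots$, with the typical (irreducible Kac module) weights serving as the base case.
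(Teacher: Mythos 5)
The paper does not prove this lemma at all: it is quoted verbatim from Van der Jeugt \cite{VDJ}, so there is no in-paper argument to match yours against. Your first half is fine and standard: $K(\chi)\cong\Lambda(\frak g_{-1})\otimes L_0(\chi)$ as a $\frak g_0$-module, the odd product $\prod_{\alpha\in R_1^+}(1-e^{-\alpha})$ is $W_0$-invariant, the Weyl character formula gives $L_0\,sch\,K(\chi)=\{K_\chi\}$, and Kac's irreducibility criterion settles statement $1)$.

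Statement $2)$, however, has a genuine gap. Your argument takes as input the exact sequence $0\to L(\mu)\to K(\chi)\to L(\chi)\to 0$, i.e.\ $sch\,K(\chi)=sch\,L(\chi)+sch\,L(\mu)$ with $\mu$ the linked atypical weight. That is not a side fact one can defer to ``Kac's determinant computation'': it is essentially the atypical character formula itself, and in this paper it appears as Proposition \ref{Kac}, which is \emph{deduced from} Lemma \ref{trans} together with the alternation identities of the following lemma. Used as you use it, the argument is circular in the context of the paper unless you supply an independent proof of the composition series, which you do not. Two further points would break the induction as sketched. First, the base case cannot be a typical weight: by linkage, no typical weight lies in the block of an atypical one, so the chain $\chi>\mu>\dots$ bottoms out at an atypical weight whose Kac module is still reducible, and there the recursion $\{K^\alpha_\mu\}=L_0\,sch\,L(\mu)$ must be established directly rather than inherited. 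Second, the single-step identity $\{e^{-\alpha}K_\chi^\alpha\}=-\{K_\mu^\alpha\}$ with the same $\alpha$ and a single sign is an oversimplification: moving $\chi-\alpha+\rho_0$ back to the dominant chamber generally requires a chain of moves that changes the omitted odd root and accumulates a sign $(-1)^r$ with $r>1$ (this is exactly the bookkeeping of the paper's next lemma and of Proposition \ref{Kac}, where the sign is $(-1)^{s(\lambda)-s(\lambda^{\sharp})}$). To make your route rigorous you would have to prove the two-step filtration of atypical Kac modules for $\frak{osp}(2,2n)$ from scratch, which is the actual content of \cite{VDJ}.
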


\begin{lemma} The following statements hold true:

$1)$ If $(\chi+\rho,\varepsilon+\delta_j)=(\chi+\rho,\varepsilon+\delta_{j+1})=0$ then
$$
\{K^{\varepsilon+\delta_j}_{\chi}\}=-\{K^{\varepsilon+\delta_{j+1}}_{\chi-\varepsilon-\delta_{j+1}}\}
$$
$2)$ If $(\chi+\rho,\varepsilon+\delta_j)=(\chi+\rho,\varepsilon-\delta_j)=0$ then 
$$
\{K^{\varepsilon+\delta_j}_{\chi}\}=-\{K^{\varepsilon-\delta_{j}}_{\chi-\varepsilon+\delta_{j}}\}
$$
$3)$ 
if $(\chi+\rho,\varepsilon-\delta_j)=(\chi+\rho,\varepsilon-\delta_{j+1})=0$ then
$$
\{K^{\varepsilon-\delta_j}_{\chi}\}=-\{K^{\varepsilon-\delta_{j+1}}_{\chi-\varepsilon+\delta_{j+1}}\}
$$

\end{lemma}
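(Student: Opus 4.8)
The plan is to handle all three identities by one mechanism: in each case I exhibit an even reflection $s\in W_0$ that carries the first isotropic root $\alpha$ onto the second root $\alpha'$, and then show that the two orthogonality hypotheses force $\chi+\rho_0$ to lie on the wall fixed by $s$. Writing $\chi+\rho=m_0\varepsilon+\sum_{i=1}^n m_i\delta_i$ and using $(\varepsilon,\varepsilon)=1$, $(\delta_i,\delta_i)=-1$, the hypotheses become linear conditions: $(\chi+\rho,\varepsilon+\delta_i)=0$ reads $m_0=m_i$, and $(\chi+\rho,\varepsilon-\delta_i)=0$ reads $m_0=-m_i$. Thus in case $1)$ the two conditions give $m_j=m_{j+1}$, in case $2)$ they give $m_j=0$, and in case $3)$ again $m_j=m_{j+1}$. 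Since $\rho_1=n\varepsilon$ only affects the $\varepsilon$-coefficient and $W_0$ fixes $\varepsilon$, the vectors $\chi+\rho_0$ and $\chi+\rho$ have the same $\delta$-coefficients $m_i$; hence in cases $1)$ and $3)$ the transposition $s=s_{\delta_j-\delta_{j+1}}$ fixes $\chi+\rho_0$, while in case $2)$ the sign change $s=s_{2\delta_j}$ fixes $\chi+\rho_0$.

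Next I record the two structural consequences of this single fact. First, each such $s$ permutes the set $R_1^{+}$: in cases $1)$ and $3)$ it interchanges $\varepsilon\pm\delta_j$ with $\varepsilon\pm\delta_{j+1}$ and fixes the remaining odd roots, and in case $2)$ it interchanges $\varepsilon+\delta_j$ with $\varepsilon-\delta_j$ and fixes the rest. Consequently $s$ carries the deleted root $\alpha$ to $\alpha'$ and sends $\prod_{\beta\in R_1^{+}\setminus\alpha}(1-e^{-\beta})$ to $\prod_{\beta\in R_1^{+}\setminus\alpha'}(1-e^{-\beta})$; together with $s(\chi+\rho_0)=\chi+\rho_0$ this yields the clean equality $s(K^{\alpha}_{\chi})=K^{\alpha'}_{\chi}$, whence $\{K^{\alpha}_{\chi}\}=\mathrm{sgn}(s)\{K^{\alpha'}_{\chi}\}=-\{K^{\alpha'}_{\chi}\}$ because $s$ is a reflection. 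Second, since $s$ permutes $R_1^{+}$ the full product $P=\prod_{\beta\in R_1^{+}}(1-e^{-\beta})$ is $s$-invariant, so $K_{\chi}=e^{\chi+\rho_0}P$ is itself fixed by $s$; as $\mathrm{sgn}(s)=-1$ this forces $\{K_{\chi}\}=0$.

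It remains to absorb the weight shift in the target, i.e. to pass from $\{K^{\alpha'}_{\chi}\}$ to $\{K^{\alpha'}_{\chi-\alpha'}\}$. Here I use the elementary factorisation
$$
K_{\chi}=(1-e^{-\alpha'})K^{\alpha'}_{\chi}=K^{\alpha'}_{\chi}-K^{\alpha'}_{\chi-\alpha'},
$$
valid because multiplying $e^{\chi+\rho_0}$ by $e^{-\alpha'}$ is exactly the substitution $\chi\mapsto\chi-\alpha'$. Applying the alternation and invoking $\{K_{\chi}\}=0$ gives $\{K^{\alpha'}_{\chi}\}=\{K^{\alpha'}_{\chi-\alpha'}\}$, and combining with the sign relation above yields $\{K^{\alpha}_{\chi}\}=-\{K^{\alpha'}_{\chi-\alpha'}\}$. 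This is the asserted identity upon substituting $(\alpha,\alpha')=(\varepsilon+\delta_j,\varepsilon+\delta_{j+1})$ in $1)$, $(\varepsilon+\delta_j,\varepsilon-\delta_j)$ in $2)$, and $(\varepsilon-\delta_j,\varepsilon-\delta_{j+1})$ in $3)$.

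The genuinely delicate point, and the one I would check most carefully, is the double role of $s$: the same wall condition $s(\chi+\rho_0)=\chi+\rho_0$ must simultaneously produce the sign flip $\{K^{\alpha}_{\chi}\}=-\{K^{\alpha'}_{\chi}\}$ and the vanishing $\{K_{\chi}\}=0$ that allows trading $\chi$ for $\chi-\alpha'$. I expect the main obstacle to be bookkeeping rather than conceptual: verifying in each case that $s$ fixes $\chi+\rho_0$ (and not merely $\chi+\rho$), that it sends $R_1^{+}\setminus\alpha$ precisely onto $R_1^{+}\setminus\alpha'$, and that the hypothesis not consumed in placing $\chi+\rho_0$ on the wall is consistent, so that $\chi-\alpha'$ is the correct shifted weight appearing on the right-hand side.
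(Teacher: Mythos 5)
Your proof is correct and rests on the same key observation as the paper's: the two atypicality conditions force the relevant reflection $s$ (the transposition $\delta_j\leftrightarrow\delta_{j+1}$ in cases $1)$ and $3)$, the sign change $\delta_j\mapsto-\delta_j$ in case $2)$) to fix $\chi+\rho_0$ while permuting $R_1^{+}$, so that the alternation annihilates $s$-symmetric expressions. The paper packages this as a single cancellation, namely $K^{\alpha}_{\chi}+K^{\alpha'}_{\chi-\alpha'}=\bigl(e^{\chi+\rho_0}-e^{\chi+\rho_0-\alpha-\alpha'}\bigr)\prod_{\beta\in R_1^{+}\setminus\{\alpha,\alpha'\}}(1-e^{-\beta})$, which is visibly $s$-symmetric, whereas you split the same fact into the equivariance $s(K^{\alpha}_{\chi})=K^{\alpha'}_{\chi}$ plus the vanishing $\{K_{\chi}\}=0$; the two computations are equivalent.
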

\begin{proof} Let us  prove the first statement. Denote by $A$ the following expression
$$
A=\prod_{\alpha\in R_1^{+}\setminus\{\varepsilon+\delta_j,\,\varepsilon+\delta_{j+1}\}}
$$
Then we have 
$$
\{e^{\chi+\rho_0}(1-e^{-\varepsilon-\delta_{j+1}})A\}+\{e^{\chi-\varepsilon-\delta_{j+1}+\rho_0}(1-e^{-\varepsilon-\delta_{j}})A\}
$$
$$
=\{(e^{\chi+\rho_0}-e^{\chi+\rho_0-2\varepsilon-\delta_{j}-\delta_{j+1}})A\}
$$
And from the conditions of the Lemma it is easy to see that expression  in  brackets is symmetric with respect to transposition $(j,j+1)$. Therefore the result  of alternation is zero. 

Let us prove the second statement. Denote by $B$ the following expression
$$
B=\prod_{\alpha\in R_1^{+}\setminus\{\varepsilon+\delta_j,\,\varepsilon-\delta_{j}\}}
$$
Then we have 
$$
\{e^{\chi+\rho_0}(1-e^{-\varepsilon+\delta_{j}})B\}+\{e^{\chi-\varepsilon+\delta_{j}+\rho_0}(1-e^{-\varepsilon-\delta_{j}})B\}
$$
$$
=\{(e^{\chi+\rho_0}-e^{\chi+\rho_0-2\varepsilon})B\}
$$
And from the conditions of the Lemma it is easy to see that expression  in  brackets is symmetric with respect to transformation  $\delta_j\rightarrow-\delta_j$. Therefore the result  of alternation is zero. 

Let us  prove the third statement. Denote by $A$ the following expression
$$
C=\prod_{\alpha\in R_1^{+}\setminus\{\varepsilon-\delta_j,\,\varepsilon-\delta_{j+1}\}}
$$
Then we have 
$$
\{e^{\chi+\rho_0}(1-e^{-\varepsilon+\delta_{j+1}})C\}+\{e^{\chi-\varepsilon+\delta_{j+1}+\rho_0}(1-e^{-\varepsilon+\delta_{j}})C\}
$$
$$
=\{(e^{\chi+\rho_0}-e^{\chi+\rho_0-2\varepsilon+\delta_{j}+\delta_{j+1}})C\}
$$
And from the conditions of the Lemma it is easy to see that expression  in  brackets is symmetric with respect to transposition $(j,j+1)$. Therefore the result  of alternation is zero. 
Lemma is proved.
\end{proof}

For any diagram $\lambda\in H(1,n)$ let us define the highest weight $\chi_{\lambda}$ by the formula
$$
\chi_{\lambda}=\lambda_1\varepsilon+\sum_{j=1}^n\mu'_j\delta_j
$$
where $\mu$ is the diagram obtaining from $\lambda$ by deleting the first row.
 \begin{proposition}\label{Kac}  Let $\lambda$ be a singular diagram then
$$
sch \,K(\chi_{\lambda})= sch\,L(\chi_{\lambda})+(-1)^{s(\lambda)-s(\lambda^{\sharp})} sch\,L(\chi_{\lambda^{\sharp}})
$$
where for any  for diagram $\lambda\in H(1,n)$    $s(\lambda)=\lambda_2+\lambda_3+\dots$.

\end{proposition}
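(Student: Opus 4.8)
The plan is to compute $L_0\cdot sch\,K(\chi_\lambda)$ directly from the Kac character formula and then peel off the two irreducible contributions. First I would record that the Kac module always satisfies $L_0\,sch\,K(\chi_\lambda)=\{K_{\chi_\lambda}\}$ (this is the content of the typical case of Van Der Jeugt's formula \cite{VDJ}, and it holds for every weight because the character of an induced module is insensitive to atypicality). Next I would pin down the atypicality: a short computation with the bilinear form gives $(\chi_\lambda+\rho,\varepsilon+\delta_j)=\lambda_1-n-(\lambda'_j+n-j)$, which vanishes exactly by the singularity hypothesis, while $(\chi_\lambda+\rho,\varepsilon-\delta_j)=2\lambda'_j+2n-2j>0$; hence $\chi_\lambda$ is atypical with the single atypical root $\alpha=\varepsilon+\delta_j$. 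Factoring $K_{\chi_\lambda}=(1-e^{-\alpha})K^{\alpha}_{\chi_\lambda}$ and using $e^{-\alpha}K^{\alpha}_{\chi_\lambda}=K^{\alpha}_{\chi_\lambda-\alpha}$ gives
$$
\{K_{\chi_\lambda}\}=\{K^{\alpha}_{\chi_\lambda}\}-\{K^{\alpha}_{\chi_\lambda-\alpha}\}=L_0\,sch\,L(\chi_\lambda)-\{K^{\alpha}_{\chi_\lambda-\alpha}\},
$$
where the identification of $\{K^{\alpha}_{\chi_\lambda}\}$ with $L_0\,sch\,L(\chi_\lambda)$ is the atypical case of Van Der Jeugt's formula \cite{VDJ}. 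So everything reduces to proving
$$
\{K^{\alpha}_{\chi_\lambda-\alpha}\}=(-1)^{r(\lambda)+1}L_0\,sch\,L(\chi_{\lambda^{\sharp}}),\qquad r(\lambda)=s(\lambda)-s(\lambda^{\sharp}).
$$

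The core of the argument is to transport the weight $\chi_\lambda-\alpha$ down to the dominant weight $\chi_{\lambda^\sharp}$ using the preceding Lemma. Since $(\alpha,\alpha)=0$, the weight $\chi_\lambda-\alpha$ is still atypical at $\alpha=\varepsilon+\delta_j$, and the same kind of computation shows that, as long as $\lambda'_{j+1}=\lambda'_j$, it is simultaneously atypical at $\varepsilon+\delta_{j+1}$. I would then apply statement $1)$ of the preceding Lemma to replace $\{K^{\varepsilon+\delta_j}_{\,\cdot}\}$ by $-\{K^{\varepsilon+\delta_{j+1}}_{\,\cdot-\varepsilon-\delta_{j+1}}\}$ and iterate along the block of equal columns $j,j+1,\dots,j+r(\lambda)-1$. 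Each step flips the sign and subtracts one $\varepsilon+\delta_{(\cdot)}$ from the weight, so after $r(\lambda)-1$ steps the accumulated weight is exactly $\chi_\lambda-r(\lambda)\varepsilon-\delta_j-\dots-\delta_{j+r(\lambda)-1}$. One checks that its $\varepsilon$-coordinate is $\lambda_1-r(\lambda)=\lambda^{\sharp}_1$ and that its $\delta$-coordinates coincide with those of $\chi_{\lambda^\sharp}$ (the deleted boxes in row $\lambda'_j$ of $\lambda$ occupy precisely columns $j,\dots,j+r(\lambda)-1$), so this weight is the dominant weight $\chi_{\lambda^\sharp}$, atypical at $\varepsilon+\delta_{j+r(\lambda)-1}$. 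A final application of Van Der Jeugt's formula identifies $\{K^{\varepsilon+\delta_{j+r(\lambda)-1}}_{\chi_{\lambda^\sharp}}\}$ with $L_0\,sch\,L(\chi_{\lambda^\sharp})$, and collecting the $r(\lambda)-1$ sign flips together with the sign from the factorization produces the coefficient $(-1)^{r(\lambda)}=(-1)^{s(\lambda)-s(\lambda^\sharp)}$.

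The main obstacle is the boundary case $\lambda'_j=1$, where the run of equal columns reaches the last index $n$ and the $\delta$-coordinate would be forced negative, so the chain of reductions cannot stay among the roots $\varepsilon+\delta_i$. Here, after reaching $\varepsilon+\delta_n$ one must cross to $\varepsilon-\delta_n$ by statement $2)$ of the preceding Lemma, move back along the roots $\varepsilon-\delta_i$ by statement $3)$, and finish with a reflection (equivalently, a direct evaluation of the resulting alternant against $L_0=\{e^{\rho_0}\}$) identifying the endpoint with $L(\chi_{\lambda^\sharp})$. The delicate point is that in this case $s(\lambda)-s(\lambda^\sharp)=0$, so although the number of applications of the three statements grows with $n-j$, the sign flips must cancel in pairs and leave the coefficient $+1$; checking that the cancellation is governed exactly by $(-1)^{s(\lambda)-s(\lambda^\sharp)}$ is where the bookkeeping has to be done carefully. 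I would organise the whole proof as an induction on $\lambda'_j$, mirroring the lemma that computes $|X_\lambda|$, splitting at each stage into the two situations $\lambda'_{j+1}=\lambda'_j$ and $\lambda'_{j+1}<\lambda'_j$ handled by statement $1)$ and by statements $2)$–$3)$ respectively.
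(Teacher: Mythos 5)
Your proposal is correct and follows essentially the same route as the paper's proof: factoring $\{K_{\chi_\lambda}\}=\{K^{\alpha}_{\chi_\lambda}\}-\{K^{\alpha}_{\chi_\lambda-\alpha}\}$ at the unique atypical root $\alpha=\varepsilon+\delta_j$, then transporting $\chi_\lambda-\alpha$ to $\chi_{\lambda^{\sharp}}$ by iterating the three statements of the preceding Lemma, with the same split into the cases $\lambda'_j>1$ and $\lambda'_j=1$. The sign bookkeeping $(-1)^{r(\lambda)}=(-1)^{s(\lambda)-s(\lambda^{\sharp})}$ also matches the paper's.
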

\begin{proof} Let us consider two cases. First one is when $\lambda'_j>1$ and the second case is when $\lambda'_j=1$.   
Consider now the first case.
Let us temporary denote $\chi_{\lambda}=\chi$. It is easy to check that 
$$
\chi_{\lambda^{\sharp}}=\chi-(\varepsilon+\delta_j)-\dots-(\varepsilon+\delta_{j+r-1})
$$
Therefore
$$
 \{K_{\chi}\}=\{(1-e^{-\varepsilon-\delta_j}) L_{\chi}\}-\{ L_{\chi-\varepsilon-\delta_j}\}
$$
If $\chi-\varepsilon-\delta_j\in P^+$  then $\chi-\varepsilon-\delta_j=\chi_{\lambda^{\sharp}}$ and we proved the proposition. 
If $\chi-\varepsilon-\delta_j\notin P^+$ then 
$$
(\chi-\varepsilon-\delta_j+\rho,\varepsilon+\delta_j)=(\chi-\varepsilon-\delta_j+\rho,\varepsilon+\delta_{j+1})=0
$$
and by previous Lemma we have 
$ 
 \{L_{\chi-\varepsilon-\delta_j}\}=- \{L_{\chi-2\varepsilon-\delta_j-\delta_{j+1}}\}. 
$
And we repeat this procedure until we arrive to $\chi_{\lambda^{\sharp}}$. Besides it is easy to see that $r=s(\lambda)-s(\lambda^{\sharp})$.And we prove the Proposition in the first case.

Now let us consider the second case. It is easy to check that 
$$
\chi_{\lambda^{\sharp}}=\chi-2r\varepsilon,\,\,r=r(\lambda)
$$
As before using the first statement of Lemma \ref{trans} we get 
$$
 \{K_{\chi}\}=\{ L_{\chi}\}+(-1)^r\{ L_{\chi-r\varepsilon-(\delta_j+\dots+\delta_n)}\}
$$
Then using the  second statement  of the same Lemma we get 
$$
\{ L_{\chi-r\varepsilon-(\delta_j+\dots+\delta_n)}\}=-\{ L_{\chi-(r+1)\varepsilon-(\delta_j+\dots+\delta_n)}\}
$$
and by the third  statement  of the same Lemma  we get
$$
\{ L_{\chi-r\varepsilon-(\delta_j+\dots+\delta_n)}\}=(-1)^r\{ L_{\chi-2r\varepsilon}\},\,\,s(\lambda)=s(\lambda^{\sharp})
$$
Proposition is proved.
\end{proof}

\begin{corollary} \label{sing3}If $\lambda$ is singular and $\lambda_1-n=\lambda'_j+n-j$ then the  following equality hold true 
$$
(-1)^{s(\lambda)}sch\,L(\chi_{\lambda})= (-1)^{s(\lambda)}sch\,K(\chi_{\lambda})-(-1)^{s(\lambda^{\sharp})}sch\,K(\chi_{\lambda^{\sharp}})+\dots
$$
$$
+(-1)^{l-1}(-1)^{s(\lambda^{(l-1)\sharp})}sch\,K(\chi_{\lambda^{(l-1)\sharp}})+(-1)^l(-1)^{s(\lambda^{l\sharp})} sch\,L(\chi_{\lambda^{l\sharp}})
$$
where $l=\lambda'_j$.
\end{corollary}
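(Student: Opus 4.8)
The plan is to iterate Proposition \ref{Kac} along the chain $X_{\lambda}=\{\lambda,\lambda^{\sharp},\lambda^{2\sharp},\dots,\lambda^{l\sharp}\}$ and to collect the signs by telescoping. First I would put Proposition \ref{Kac} into a sign-normalized form. Multiplying the identity $sch\,K(\chi_{\mu})=sch\,L(\chi_{\mu})+(-1)^{s(\mu)-s(\mu^{\sharp})}sch\,L(\chi_{\mu^{\sharp}})$ by $(-1)^{s(\mu)}$ and using $(-1)^{2s(\mu)}=1$ yields
\begin{equation*}
(-1)^{s(\mu)}sch\,L(\chi_{\mu})=(-1)^{s(\mu)}sch\,K(\chi_{\mu})-(-1)^{s(\mu^{\sharp})}sch\,L(\chi_{\mu^{\sharp})}},
\end{equation*}
valid for every singular diagram $\mu$. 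This is the one-step recursion that drives the whole argument.

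Next I would verify that this recursion can legitimately be applied $l=\lambda'_j$ times. By the Lemma computing $|X_{\lambda}|=\lambda'_j+1$ (and the induction in its proof, which shows $(\lambda^{s\sharp})'_j=\lambda'_j-s$), each diagram $\lambda^{s\sharp}$ with $0\le s\le l-1$ is again singular, while the terminal diagram $\lambda^{l\sharp}$ is regular. Hence Proposition \ref{Kac}, in the normalized form above, applies with $\mu=\lambda^{s\sharp}$ for every $s<l$, and the chain of substitutions stops exactly at $\lambda^{l\sharp}$, whose $sch\,L$ term survives to the right-hand side.

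Finally I would set $A_s=(-1)^{s(\lambda^{s\sharp})}sch\,L(\chi_{\lambda^{s\sharp}})$ and $B_s=(-1)^{s(\lambda^{s\sharp})}sch\,K(\chi_{\lambda^{s\sharp}})$, so that the normalized recursion reads simply $A_s=B_s-A_{s+1}$ for $0\le s\le l-1$. Telescoping this gives
\begin{equation*}
A_0=B_0-B_1+B_2-\dots+(-1)^{l-1}B_{l-1}+(-1)^l A_l,
\end{equation*}
which, after writing out $A_0$, the $B_s$, and $A_l$, is precisely the asserted equality; a short induction on $l$ formalizes the telescoping. There is essentially no analytic content here: the only points demanding care are (i) confirming that every intermediate $\lambda^{s\sharp}$ with $s<l$ is singular so that Proposition \ref{Kac} is legitimately invoked at each step — which is exactly what the structure of $X_{\lambda}$ guarantees — and (ii) keeping the signs $(-1)^{s(\lambda^{s\sharp})-s(\lambda^{(s+1)\sharp})}$ straight, which the normalization into $A_s,B_s$ handles automatically.
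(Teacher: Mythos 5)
Your proposal is correct and matches the paper's approach: the paper simply states that the corollary "easily follows from Proposition \ref{Kac}", and your sign-normalized recursion $A_s=B_s-A_{s+1}$ with telescoping along the chain $X_{\lambda}$ is exactly the intended iteration, with the singularity of each intermediate $\lambda^{s\sharp}$ ($s<l$) guaranteed by the lemma on $|X_{\lambda}|$.
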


Corollary easily follows from the Proposition \ref{Kac}.

  In order to connect super Jacobi polynomials with representation theory  we need to consider the outer automorphism $\theta$ (see \cite{Serga1}) of the Lie superalgebra $\frak{osp}(2,2n)$ which acts on weights the by $\theta(\varepsilon)=-\varepsilon$ and $\theta(\delta_j)=\delta_j,\,j=1,\dots,n$.
\begin{definition}

Let  $\lambda\in H(1,n)$ then we set
$$
L(\lambda)=\begin{cases} L(\chi_{\lambda}),\,\,\text{if},\,\,\lambda_1\le n\\
L(\chi_{\lambda})\oplus \theta(L(\chi_{\lambda}),\,\,\text{if},\,\,\lambda_1> n
\end{cases}
$$
and 
 $$
E(\lambda)=\begin{cases}L(\chi_{\lambda}),\,\,\text{if}\,\,\lambda_1\le n\\
 K(\chi_{\lambda})+\theta( K(\chi_{\lambda})),\,\, \text{if}\,\,\lambda_1> n
\end{cases}
$$
\end{definition}
We will also consider supercharacters as polynomials in indeterminates
$$
x=e^{\varepsilon},\,x^{-1}=e^{-\varepsilon},\,y_j=e^{\delta_j},\, y_{j}^{-1}=e^{-\delta_j}, \,\, j=1,\dots, n
$$

\begin{thm}\label{Euler} Polynomials $sch\,E(\lambda)$ satisfy the following Pieri identity
$$
sch\,E(\Box) sch\,E(\lambda)=\sum_{\mu\in S(\lambda)}d_{\lambda,\mu} sch\, E(\mu)
$$
where
$$
d_{\lambda,\mu}=\begin{cases}0\,\,\text{if}\,\, \lambda=\mu\,\,\\
0,\,\,\text{if}\,\, \lambda_1=n,\mu_1=n-1,\\
2\,\,\text{if}\,\,\lambda_1=n+1,\mu_1=n\\
(-1)^{s(\lambda)-s(\mu)},\,\,\text{otherwise}
\end{cases}
$$
\end{thm}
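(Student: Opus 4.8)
The plan is to prove the identity first at the level of Kac modules, where multiplication by $sch\,E(\Box)$ is governed by the Pieri rule for the even subalgebra $\frak{g}_0=\frak{so}(2)\oplus\frak{sp}(2n)$, and then to assemble the Euler characteristics by means of the twist $\theta$. The starting observation is that $E(\Box)$ is the standard module $V=\Bbb C^{2|2n}$, of highest weight $\varepsilon$, so that
\[
sch\,E(\Box)=(x+x^{-1})-\sum_{j=1}^n(y_j+y_j^{-1})=(e^{\varepsilon}+e^{-\varepsilon})-\sum_{j=1}^n(e^{\delta_j}+e^{-\delta_j}),
\]
a Laurent polynomial invariant under the Weyl group $W_0$ and under $\theta$. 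Writing $sch\,K(\chi)=ch\,V_0(\chi)\prod_{\alpha\in R_1^+}(1-e^{-\alpha})$, where $V_0(\chi)$ is the irreducible $\frak{g}_0$--module of highest weight $\chi$, and using that $\prod_{\alpha\in R_1^+}(1-e^{-\alpha})$ is $W_0$--invariant, one gets $sch\,K(\chi)=\{K_\chi\}/L_0$. Since $sch\,E(\Box)$ is $W_0$--invariant it may be pulled through the alternation, which yields the numerator identity
\[
sch\,E(\Box)\,\{K_\chi\}=\{K_{\chi+\varepsilon}\}+\{K_{\chi-\varepsilon}\}-\sum_{j=1}^n\bigl(\{K_{\chi+\delta_j}\}+\{K_{\chi-\delta_j}\}\bigr).
\]

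Next I would straighten each alternation. The two $\pm\varepsilon$--terms change only the first row of $\lambda$ and enter with sign $+1$; the $\pm\delta_j$--terms are reordered by the $C_n$--Weyl group exactly according to the classical symplectic Pieri rule for the vector representation (non--addable or non--removable moves giving a vanishing alternation), and enter with sign $-1$. Hence, for every $\chi_\lambda$ whose first--row deletion is again a diagram (in particular for all $\lambda_1>n$),
\[
sch\,E(\Box)\,sch\,K(\chi_\lambda)=\sum_{\mu\in S(\lambda)}(-1)^{s(\lambda)-s(\mu)}sch\,K(\chi_\mu),
\]
the sign being $+1$ for a box in the first row and $-1$ for a box in a lower row, exactly the ``otherwise'' value of $d_{\lambda,\mu}$; the diagonal coefficient is absent because $V$ carries no zero weight.

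To pass from $K$'s to $E$'s I would use the twist identity $\theta\,sch\,K(\chi)=sch\,K(\theta\chi+2n\varepsilon)$, which follows from $\theta\bigl(\prod_{\alpha\in R_1^+}(1-e^{-\alpha})\bigr)=e^{2n\varepsilon}\prod_{\alpha\in R_1^+}(1-e^{-\alpha})$. For $\mu_1>n$ one has $E(\mu)=K(\chi_\mu)+\theta K(\chi_\mu)$, and since $sch\,E(\Box)$ is $\theta$--invariant the whole expansion is $\theta$--symmetric, so applying $\theta$ to the Kac--module Pieri and adding glues the terms into $\sum_\mu(-1)^{s(\lambda)-s(\mu)}sch\,E(\mu)$. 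The two exceptional values appear exactly at the wall where the $\varepsilon$--weight equals $n$, a locus of typical, $\theta$--fixed weights. When $\lambda_1=n$ one has $sch\,E(\lambda)=sch\,K(\chi_\lambda)$, and the first--row addition ($\varepsilon$--weight $n+1$) and deletion ($\varepsilon$--weight $n-1$) are $\theta$--partners that fuse into the single Euler characteristic obtained by adding a first--row box; this forces that coefficient to be $1$ and absorbs the deletion term, giving $d_{\lambda,\mu}=0$ for $\mu_1=n-1$. When $\lambda_1=n+1$ the first--row deletion lands on the $\theta$--fixed weight of $\varepsilon$--weight $n$, so both summands of $E(\lambda)=K(\chi_\lambda)+\theta K(\chi_\lambda)$ contribute the same $sch\,E(\mu)$ and $d_{\lambda,\mu}=2$.

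The remaining regime is $\lambda_1<n$, where $E(\lambda)=L(\chi_\lambda)$ and $\chi_\lambda$ may be atypical, with atypical root of the form $\varepsilon-\delta_j$. Here I would run the same invariance argument starting from the Van der Jeugt formula $sch\,L(\chi)=\{K^{\alpha}_\chi\}/L_0$, and bring the shifted numerators $\{K^{\varepsilon\pm\delta_j}_{\chi\pm\nu}\}$ to standard form using the three sign relations established just above, together with Proposition \ref{Kac}. The main obstacle is precisely this atypical straightening: one must show that the corrections telescope, cancelling every contribution not indexed by a diagram of $S(\lambda)$ (such as the stray Kac term of $\varepsilon$--weight $\lambda_1-1$ produced when the first row cannot be shortened), so that only the supercharacters $sch\,E(\mu)$, $\mu\in S(\lambda)$, survive, with coefficients $(-1)^{s(\lambda)-s(\mu)}$. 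I expect an induction on $\lambda_1$ to organize these cancellations, each atypical step reducing to a lower case through the quoted relations.
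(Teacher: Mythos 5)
Your strategy works in the region where $E(\lambda)$ is assembled from Kac modules: there $sch\,K(\chi)=\{K_\chi\}/L_0$, the $W_0$--invariance of $sch\,E(\Box)$ lets you shift the leading exponential of $K_\chi$, and the classical $C_n$ straightening produces the signs $(-1)^{s(\lambda)-s(\mu)}$; this is consistent with the paper, which likewise treats $\lambda_1\ge n$ as the easy case and derives $d_{\lambda,\lambda}=0$ and the wall values from the absence of a constant term in $sch\,E(\Box)$ and from the action of $\theta$ on Kac modules. The genuine gap is exactly where you flag ``the main obstacle'': for $\lambda_1<n$ one has $E(\lambda)=L(\chi_\lambda)$ with $\chi_\lambda$ generically atypical (atypical root $\varepsilon-\delta_j$), and the required identity is a Pieri rule for \emph{irreducible} supercharacters. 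You propose to start from Van der Jeugt's $\{K^{\alpha}_\chi\}$ and telescope the stray terms via the three sign relations and Proposition \ref{Kac}, and you ``expect an induction on $\lambda_1$'' to cancel every contribution not indexed by a diagram of $S(\lambda)$ --- but that cancellation \emph{is} the theorem in this regime, and it is left unexecuted; in particular the absorption of the term of $\varepsilon$--weight $\lambda_1-1$ when the first row cannot be shortened (the source of $d_{\lambda,\mu}=0$ for $\lambda_1=n$, $\mu_1=n-1$) is asserted, not computed.

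The paper avoids this difficulty by invoking Shader's closed alternant formula (\ref{LS}): for $\lambda_1\le n$, $L_0\,sch\,E(\lambda)=\bigl\{y_1^{\mu'_1+n}\cdots y_n^{\mu'_n+1}\prod_{i=1}^{\lambda_1}(u-v_i)\bigr\}$ with $u=x+x^{-1}$, $v_j=y_j+y_j^{-1}$. Writing $sch\,E(\Box)=(u-v_{\lambda_1+1})-\sum_{i\ne\lambda_1+1}v_i$ and multiplying this single alternant term by term reduces the whole atypical case to elementary identities such as $\{(u-v_{\lambda_1+1})B_\lambda\}=\{B_{\lambda+\varepsilon}\}$ and $\{v_jB_\lambda\}=\{B_{\lambda+\delta_j}\}+\{B_{\lambda-\delta_j}\}$, the exceptional coefficients emerging from the degenerate case $j=\lambda_1$, $\mu'_j=0$. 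To complete your route without this formula you would have to actually carry out the telescoping you describe --- for instance, prove the Kac--module Pieri identity for all dominant weights, then invert Proposition \ref{Kac} as in Corollary \ref{sing3} to express $sch\,L(\chi_\lambda)$ through Kac supercharacters, and verify that all non--dominant and non--diagram terms cancel. As written, that verification is missing, so the proof is incomplete precisely in the case that carries the content of the theorem.
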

\begin{proof} 
We need a formula for  supercharacter of irreducible module   over Lie superalgebra $\frak{osp}(2,2n)$  in case $\lambda_1\le n$.  Let use denote  $u=x+x^{-1}$ and $v_j=y_j+y^{-1}_j,\,j=1,\dots,n$. Then  by  \cite {CL} Proposition 3.1. the following formulae holds true
\begin{equation}\label{LS}
L_0 E(\lambda)=\left\{y_1^{\mu'_1+n}\dots y_n^{\mu'_n+1}\prod_{i=1}^{\lambda_1}(u-v_i)\right\}
\end{equation}
where $L_0=\prod_{i<j}(v_i-v_j)$ and the brackets $\left\{\right\}$ mean the alternation over the Weyl group $S_n\ltimes Z_{2}^n$.

 Let us denote by $B_{\lambda}$ the expression in the  brackets in the formula (\ref{LS}). Then  we have 
$$
L_0sch\,E(\Box) sch\,E(\lambda)=\left\{(u-v_{\lambda_1+1})B_{\lambda}\right\}-\sum_{i\ne \lambda_1+1} \left\{v_iB_{\lambda}\right\}
$$
It is easy to see that
$$
\left\{(u-v_{\lambda_1+1})B_{\lambda}\right\}=\left\{B_{\lambda+\varepsilon}\right\},\,\,\left\{v_jB_{\lambda}\right\}=0,\,\,j>\lambda_1+1,\,\, 
$$
$$
\left\{v_jB_{\lambda}\right\}=\left\{B_{\lambda+\delta_j}\right\}+\left\{B_{\lambda-\delta_j}\right\},\,j<\lambda_1
$$
and for $j= \lambda_1$ and $\mu'_j>0$  we have
$$
\left\{v_jB_{\lambda}\right\}=\left\{B_{\lambda+\delta_j}\right\}+\left\{B_{\lambda-\delta_j}\right\}.
$$
If $\mu'_j=0$ Then we have 
$$
(y_{j}+y_{j}^{-1})y_{j}^{n-j+1}y_{j+1}^{n-j}(u-v_j)=y_{j}^{n-j+2}y_{j+1}^{n-j}(u-v_j)+y_{j}^{n-j}y_{j+1}^{n-j}(u-v_j)
$$
and
$$
y_{j}^{n-j}y_{j+1}^{n-j}(u-v_j)=y_{j}^{n-j}y_{j+1}^{n-j}u-y_{j}^{n-j+1}y_{j+1}^{n-j}-y_{j}^{n-j-1}y_{j+1}^{n-j}
$$
Therefore
$$
\left\{v_jB_{\lambda}\right\}=\left\{v_jB_{\lambda+\delta_j}\right\}+\left\{v_jB_{\lambda-\varepsilon}\right\}
$$

Now let us consider the case $\lambda_1\ge  n$. In this case a proof much easy since the formula for Kac module is more simple.  The condition $d_{\lambda,\lambda}=0$ follows from the fact that characters of  $E(\Box)$ has no constant term.  The condition $d_{\lambda,\mu}=0$ follows from the  fact that  $\theta(sch\,K(\chi_{\lambda}))=sch\,K(\chi_{\mu})$. In the third case condition $d_{\lambda,\mu}=1$ follows from the fact that $\theta(K(\chi_{\mu})=K(\chi_{\mu})$.
\end{proof}

\begin{corollary}\label{last} $SJ_{\lambda}(\infty)=(-1)^{s(\lambda)}sch\,E(\lambda)$
\end{corollary}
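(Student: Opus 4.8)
The statement to prove is that $SJ_{\lambda}(\infty) = (-1)^{s(\lambda)}\,sch\,E(\lambda)$, where $SJ_{\lambda}(\infty)$ is the $t\to\infty$ limit of the specialized super Jacobi polynomial and $E(\lambda)$ is the Euler supercharacter defined above. The natural strategy is to show that both sides satisfy the same recursion (Pieri identity) with the same initial condition, and then invoke uniqueness. The left-hand side inherits a Pieri identity from the super Jacobi polynomials by taking the limit $k\to -1$, $p=t(k+1)$, $t\to\infty$; the right-hand side satisfies the Pieri identity established in Theorem \ref{Euler}. The key observation is that in the $t\to\infty$ limit, the coefficients $a_{\lambda,\mu}(t)$ computed via Lemma \ref{special} should match (up to the sign $(-1)^{s(\lambda)-s(\mu)}$) the combinatorial coefficients $d_{\lambda,\mu}$ appearing in Theorem \ref{Euler}.

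\medskip

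\emph{Step 1.} First I would take the limit of the Pieri relation $p_1 J_{\lambda}=\sum_{\mu\in S(\lambda)} a_{\lambda,\mu} J_{\mu}$ under the specialization $p=t(k+1)$, $k\to-1$, and then $t\to\infty$. Using the second Remark after Lemma \ref{special}, the operation $t\to\infty$ coincides with first sending $k\to-1$ and then $(p,q)\to(-1,0)$, so the limiting coefficients $a_{\lambda,\mu}(\infty)$ are governed by Theorem \ref{lim1}. Applying the explicit formulas there, I expect that $\tilde a_{\lambda,\mu}=1$ when a box is added (statement $1$ of Theorem \ref{lim1}), while the deletion coefficients and the diagonal coefficient $\tilde a_{\lambda,\lambda}$ evaluate in the limit to the values $0$, $2$, or $(-1)^{s(\lambda)-s(\mu)}$ matching the cases of $d_{\lambda,\mu}$ in Theorem \ref{Euler}. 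The sign prefactor $(-1)^{s(\lambda)}$ is exactly what converts the signs $(-1)^{s(\lambda)-s(\mu)}$ into the $\pm 1$ pattern of the Euler Pieri rule.

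\medskip

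\emph{Step 2.} Next I would verify the initial condition: for $\lambda=\emptyset$ we have $SJ_{\emptyset}(\infty)=1$ (since $J_{\emptyset}=1$) and $s(\emptyset)=0$, while $E(\emptyset)=L(\chi_{\emptyset})$ has supercharacter $1$, so both sides agree. I would also need to identify the specialization of the generating variable $p_1$ with the supercharacter $sch\,E(\Box)$ under the substitution $x=e^{\varepsilon}$, $y_j=e^{\delta_j}$; here the role of $m=1$ is essential, since $p_1=x+x^{-1}+k^{-1}\sum_j(y_j+y_j^{-1})$ and $k\to-1$ turns $k^{-1}$ into $-1$, matching the weight structure of $osp(2,2n)$.

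\medskip

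\emph{Step 3 and the main obstacle.} Finally, having matched both initial data and Pieri coefficients, I would conclude by induction on $|\lambda|$ using the uniqueness argument of the opening Theorem (the recursion \ref{rec}). The hard part will be the bookkeeping in Step 1: one must carefully track which factors in the rational coefficients $a_{\lambda,\mu}$ are divisible by $(k+1)$ versus nonzero at $k=-1$, apply Lemma \ref{special} to compute the $t\to\infty$ behavior, and confirm that the boundary cases ($\lambda_1=n$ with $\mu_1=n-1$, giving $d_{\lambda,\mu}=0$; and $\lambda_1=n+1$ with $\mu_1=n$, giving $d_{\lambda,\mu}=2$) emerge correctly from the limiting formulas of Theorem \ref{lim1}. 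The sign reconciliation $(-1)^{s(\lambda)-s(\mu)}$ versus $d_{\lambda,\mu}$ must be checked case by case, and this is where the earlier combinatorial machinery (Theorem \ref{comb} and Proposition \ref{Kac}) does the real work of ensuring the two recursions genuinely coincide.
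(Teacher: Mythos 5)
Your proposal is correct and follows essentially the same route as the paper: the paper likewise obtains the limiting Pieri coefficients by letting $p\to 0$ in Theorem \ref{lim1} (getting the values $1$, $0$, $1-\delta(\lambda_1-n)+\delta(\lambda_1-n-1)$), observes that after the sign twist $(-1)^{s(\lambda)}$ these agree with the coefficients $d_{\lambda,\mu}$ of Theorem \ref{Euler}, and concludes by uniqueness of the family determined by the Pieri recursion. Your write-up is in fact more explicit than the paper's about the initial condition and the identification of $p_1|_{k=-1}$ with $sch\,E(\Box)$, which the paper leaves implicit.
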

\begin{proof}
Let us  take limit  as $p\to 0$ in the formulae  of the Theorem \ref{lim1} then we get 
$$
a_{\lambda,\mu}=\begin{cases}1\,\,\text{if}\,\, \mu\in S^+(\lambda)\\
0\,\,\text{if}\,\, \mu=\lambda\\
1-\delta(\lambda_1-n)+\delta(\lambda_1-n-1)\,\, \text{if}\,\, \mu_1=\lambda_1-1\\
1\,\, \text{otherwise}\,\
\end{cases}
$$
Therefore  $(-1)^{s(\lambda)}sch\,E(\lambda)$ satisfy the same Pieri formulae as $J_{\lambda}(\infty)$ and Corollary follows.
\end{proof} 
\begin{remark}\label{irr} Conditions $\lambda_1>n$ and  $\lambda_1-n\ne \lambda'_j+n-j$ for any $1\le j\le n$ are  equivalent  to the typicality of $\chi_{\lambda}$. Therefore under such conditions $K(\chi_{\lambda})$ is irreducible module over Lie superalgebra $\frak{osp}(2,2n)$ and module $L(\lambda)=K(\chi_{\lambda})\oplus \theta(K(\chi_{\lambda}))$ is irreducible module over Lie supergroup $OSP(2,2n)$. 

We also should note that from  the formula (\ref{projective}) and Corollary  \ref{last} it follows that 
$$
 SI_{\lambda}=(-1)^{s(\lambda)}sch\,E(\lambda)+(-1)^{s(\lambda^{\sharp})}sch\,E(\lambda^{\sharp})
 $$ 
 Therefore from the BGG duality in this case  (see \cite{Z}) it follows that  up to sign the polynomials $SI_{\lambda}$  coincide with  supercharacters of   the projective coves of irreducible finite dimensional modules over supergroup $OSP(2,2n)$. 
\end{remark}
\begin{corollary}\label{irr1} Let $\lambda\in H(1,n)$ such that $\lambda_1-n=\lambda'_j+n-j,\,1\le j\le n$. Then
$$
SJ_{\lambda}(\lambda'_j)=(-1)^{s(\lambda)}sch\,L(\lambda)
$$
\end{corollary}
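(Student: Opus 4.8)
The plan is to evaluate the rational expression (\ref{infty1}) for $SJ_{\lambda}(t)$ at the integer point $t=\lambda'_j$ and then to identify the resulting alternating sum with the $\theta$-symmetrisation of the Kac-module expansion in Corollary \ref{sing3}. Write $l=\lambda'_j$. The coefficients of $SJ_{\lambda}(t)$ are, by Lemma \ref{special} together with Lemma \ref{koef}, rational functions of $t$ whose only pole sits at $t=l-1$; hence $SJ_{\lambda}(t)$ extends to $t=l$ and its value there is obtained by evaluating (\ref{infty1}). Since $t-l+1=1$ at $t=l$, that substitution collapses (\ref{infty1}) to
\[
SJ_{\lambda}(\lambda'_j)=\sum_{m=0}^{l-1}(-1)^m SJ_{\lambda^{m\sharp}}(\infty)+(-1)^l\,2\,SJ_{\lambda^{l\sharp}}(\infty).
\]

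Next I would translate each term into representation theory. By Corollary \ref{last}, $SJ_{\lambda^{m\sharp}}(\infty)=(-1)^{s(\lambda^{m\sharp})}sch\,E(\lambda^{m\sharp})$. The combinatorics of the chain $X_{\lambda}$ show that $\lambda^{m\sharp}$ is singular, hence $(\lambda^{m\sharp})_1>n$, for $0\le m\le l-1$, so $E(\lambda^{m\sharp})=K(\chi_{\lambda^{m\sharp}})+\theta(K(\chi_{\lambda^{m\sharp}}))$; the terminal diagram $\tau:=\lambda^{l\sharp}$ satisfies $(\tau)'_j=0$, whence $\tau_1<j\le n$ and $E(\tau)=L(\chi_{\tau})$. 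I would then apply $\theta$ to the identity of Corollary \ref{sing3} and add it to the original. As $\lambda_1>n$ gives $L(\lambda)=L(\chi_{\lambda})\oplus\theta(L(\chi_{\lambda}))$, the left-hand side becomes $(-1)^{s(\lambda)}sch\,L(\lambda)$, while for $0\le m\le l-1$ the paired Kac terms assemble into $(-1)^{s(\lambda^{m\sharp})}sch\,E(\lambda^{m\sharp})$, reproducing $\sum_{m=0}^{l-1}(-1)^m SJ_{\lambda^{m\sharp}}(\infty)$ exactly.

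The main obstacle is the terminal term: symmetrisation produces $sch\,L(\chi_{\tau})+sch\,\theta(L(\chi_{\tau}))$, which must reproduce the factor $2$, i.e.\ equal $2\,SJ_{\tau}(\infty)=2(-1)^{s(\tau)}sch\,L(\chi_{\tau})$. This is exactly the step where the hypothesis $t=\lambda'_j$ (rather than an arbitrary integer) enters, and where I expect the genuine work. I would settle it by proving that $sch\,L(\chi_{\tau})$ is $\theta$-invariant, that is, symmetric under $x\mapsto x^{-1}$. Since $\tau_1\le n$, formula (\ref{LS}) of \cite{CL} writes $L_0\,sch\,E(\tau)$ as the $S_n\ltimes\Bbb Z_2^n$-alternation of a monomial in the $y_j$ multiplied by $\prod_{i=1}^{\tau_1}(u-v_i)$ with $u=x+x^{-1}$; because this depends on $x$ only through $u$ and $L_0$ is free of $x$, the character $sch\,L(\chi_{\tau})=sch\,E(\tau)$ is symmetric in $x\leftrightarrow x^{-1}$, so $sch\,\theta(L(\chi_{\tau}))=sch\,L(\chi_{\tau})$.

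With this $\theta$-invariance the terminal bracket equals $2\,sch\,L(\chi_{\tau})$, matching the factor $2$, and the summed-and-symmetrised identity becomes precisely the displayed formula for $SJ_{\lambda}(\lambda'_j)$. Comparing the two expressions yields $SJ_{\lambda}(\lambda'_j)=(-1)^{s(\lambda)}sch\,L(\lambda)$, as claimed. The only points requiring care beyond bookkeeping are the regularity of (\ref{infty1}) at $t=l$ and the $\theta$-invariance of the terminal covariant module $L(\chi_{\tau})$; the rest is a telescoping of the alternating signs.
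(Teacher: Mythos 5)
Your proposal is correct and follows essentially the same route as the paper, whose proof is the single sentence that the equality follows from Corollary \ref{sing3} and Corollary \ref{last}; you simply make explicit the steps the paper leaves implicit (evaluating (\ref{infty1}) at $t=\lambda'_j$, symmetrising Corollary \ref{sing3} under $\theta$, and checking the $\theta$-invariance of the terminal term via (\ref{LS})). These filled-in details are all sound.
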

\begin{proof} The equality follows from Corollary \ref{sing3} and Corollary \ref{last}.
\end{proof}

\section{Acknowledgements}
This work  by A.N. Sergeev was supported  by  Russian Ministry of Education  and  Science  
(grant 1.492.2016/1.4) (sections 2,3,4) and   by the  Russian  Academic Excellence Project '5-100' (sections 5,6).

\end{document}